\newtheorem{Cor}{Corollary}
 \newtheorem{Lemma}{Lemma}
 \newtheorem{ex}{Example}
 \newtheorem{Proposition}{Proposition}
 \theoremstyle{definition}
 \theoremstyle{remark}
 \newtheorem{Remark}[Lemma]{Remark}
 \numberwithin{equation}{subsection}
\begin{document}

\title[TRANSITIVITY AND RELATED NOTIONS FOR GRAPH INDUCED SYMBOLIC SYSTEMS]{TRANSITIVITY AND RELATED NOTIONS FOR GRAPH INDUCED SYMBOLIC SYSTEMS}%
\author{Prashant Kumar and Puneet Sharma}
\address{Department of Mathematics, I.I.T. Jodhpur, NH 65, Nagaur Road, Karwar, Jodhpur-342030, INDIA}%
\email{puneet@iitj.ac.in, kumar.48@iitj.ac.in}%


\subjclass{37B10, 37B20, 37B51}

\keywords{multidimensional shift spaces, shifts of finite type, transitivity, weak mixing, strong mixing}

\begin{abstract}
In this paper, we investigate the dynamical behavior of a two dimensional shift $X_G$ (generated by a two dimensional graph $G=(\mathcal{H},\mathcal{V})$) using the adjacency matrices of the generating graph $G$. In particular, we investigate properties such as transitivity, directional transitivity, weak mixing, directional weak mixing and mixing for the shift space $X_G$. We prove that if $(HV)_{ij}\neq 0 \Leftrightarrow (VH)_{ij}\neq 0$ (for all $i,j$), while doubly transitivity (weak mixing) of $X_H$ (or $X_V$) ensures the same for two dimensional shift generated by the graph $G$, directional transitivity (in the direction $(r,s)$) can be characterized through the block representation of $H^rV^s$. We provide necessary and sufficient criteria to establish horizontal (vertical) transitivity for the shift space $X_G$. We also provide examples to establish the necessity of the conditions imposed. Finally, we investigate the decomposability of a given graph into product of graphs with reduced complexity.
\end{abstract}
\maketitle

\section{Introduction}
Symbolic dynamics has widely been used to investigate the dynamics of a various physical and natural processes around us. As any general dynamical system can be embedded in a symbolic system with appropriate number of symbols \cite{fu}, it is sufficient to study the shift spaces and its subsystems to investigate the dynamics of a general discrete dynamical system. As the study of a general symbolic system can be used to model the dynamics of various systems arising in different fields of sciences and engineering, the area has found applications in various areas of science and engineering such as automata theory, data transmission, data storage and communication systems \cite{bruce,shanon,lind1}.\\

In recent years, multidimensional symbolic dynamics has emerged as a topic of interest and has captivated the attention of several researchers across the globe. The convenience of symbol-based representation allows a system with higher complexity to be studied through multidimensional symbolic systems. The easier computability and wide applicability of such systems has enabled the topic to find applications in different fields of sciences and engineering including mathematics, computer science, physics and many more. In particular, area has found applications areas such as computational biology, tiling spaces and study of quasicrystals \cite{eug,robinson,song,ted}. In recent times, several researchers have extensively investigated various dynamical and structural aspects of multidimensional shifts and interesting results have been obtained. In \cite{ber}, the author established the undecidability of the non-emptiness problem for multidimensional shift spaces. In \cite{pd}, the authors characterized any multidimensional shift of finite type using an infinite matrix. In \cite{pd2}, the authors provide an algorithm for generating the elements of the shift space using a sequence of ﬁnite matrices (of increasing order). The authors establish that the sequence generated yields precisely the elements of the shift space under consideration and hence characterizes the elements of the shift space. In \cite{sam}, the authors established that any strongly irreducible two dimensional shift space has a dense set of periodic points. In \cite{quas}, the authors proved that any multidimensional shifts of finite type exhibiting positive topological entropy cannot be minimal. In particular, they established that any shift of finite type with positive topological entropy contains a shift which is not of finite type and hence contains infinitely many shifts of finite type. In \cite{boy}, the authors investigate mixing $Z^d$ shifts of finite type and sofic shifts with large entropy. They give examples to prove that there exists $Z^d$-mixing systems such that no non-trivial full shift is a factor for such systems. They also establish existence of sofic systems such that the only minimal subsystem is a singleton. In \cite{hoch2}, authors prove that the set of entropies for $Z^d$ shifts of finite type ($d\geq 2$) coincides with the set of the infimum of all possible recursive sequences of rational numbers. In \cite{hoch1}, the author established that any $h\geq 0$ is the entropy of a $Z^d$ effective dynamical system if and only if it is the lim inf of a recursive sequence of rational numbers. In \cite{pp}, authors discuss the non-emptiness problem and the existence of periodic points for a multidimensional shift space. They derive conditions under which the shift space is non-empty and posseses periodic points. The authors investigate the structure of the shift space using the generating matrices. They prove that any $d$-dimensional shift of finite type is finite if and only if it is conjugate to a shift generated through permutation matrices. In \cite{markley}, authors investigated the structure and mixing properties of two-dimensional shift spaces generated by consistent matrices. In \cite{markley1}, the authors discuss the entropy and ergodicity of a two-dimensional shift space using consistent matrices. In \cite{Nat}, authors gave generalized definitions of transitivity for two-dimensional shift spaces and studied various mixing notions for Dot Systems. Before we move further, we provide some of the basic definitions and concepts required. \\

Let $A = \{a_i : i \in I\}$ be a finite set, $d$ be a positive integer and let $A$ be equipped with the discrete metric. Let $A^{\mathbb{Z}^d}$ be equipped with the product topology. Then, the metric $\mathcal{D} : A^{\mathbb{Z}^d} \times A^{\mathbb{Z}^d} \rightarrow \mathbb{R}^+$  defined as $\mathcal{D} (x,y) = \frac{1}{n+1}$  (where $n$ is the least non-negative integer such that $x \neq y$ in $R_n = [-n,n]^d$) generates the product topology. For any $a\in \mathbb{Z}^d$, the map $\sigma^a : A^{\mathbb{Z}^d} \rightarrow A^{\mathbb{Z}^d}$ defined as $(\sigma^a (x))(k)= x(k+a)$ is a $d$-dimensional shift and is a homeomorphism. Any $X \subset A^{\mathbb{Z}^d}$ is called shift invariant if $\sigma^a(X)\subset X$ for all $a\in\mathbb{Z}^d$. Any non-empty, closed, shift invariant subset of $A^{\mathbb{Z}^d}$ is called a \textbf{subshift}. Let $\mathcal{F}$ be a given set of finite patterns and let $X=\overline{\{x\in A^{\mathbb{Z}^d}: \text{any pattern from~~} \mathcal{F} \text{~~does not appear in~~} x \}}$. Then $X$ defines a subshift of $A^{\mathbb{Z}^d}$ generated by set of forbidden patterns $\mathcal{F}$. We say that the shift space $X$ is a \textbf{shift of finite type} if it can be generated by a finite forbidden set of finite patterns. We say that a pattern is \textbf{allowed} if it is not an extension of any forbidden pattern.  See \cite{bruce,coven,lind1,Jan,Hoch,morse,pd} for details.\\

A subshift $X$ is called \textbf{transitive} if for any pair of non-empty open sets $U,V$ in $X$, there exists  $n\in\mathbb{Z}^d$ such that $\sigma^n (U)\cap V\neq \emptyset$. A one-dimensional subshift $X$ is called \textbf{doubly transitive} if both the  left and right shifts are transitive for positive time. For any $r\in\mathbb{Z}^d$, a subshift $X$ is called \textbf{r-transitive} if for any non-empty open sets $U,V$ in $X$, there exists  $k\in\mathbb{Z}$ such that $\sigma^{kr} (U)\cap V\neq \emptyset $. It may be noted that a subshift  $X$ is $r$-transitive if the system $(X,\sigma^r)$ is transitive (as a one dimensional system). A shift space $X$ is called \textbf{totally transitive} if $(X,\sigma^r)$ is transitive for all $r\in\mathbb{Z}^d \setminus \{\textbf{0}\}$. A subshift $X$ is called \textbf{weakly mixing} if for any pairs of non-empty open sets $U_1, U_2$ and $V_1,V_2$ in $X$, there exists  $n\in\mathbb{Z}^d$ such that $\sigma^n (U_i)\cap V_i\neq \emptyset~~$ for $i=1,2$. For any $r\in\mathbb{Z}^d$, a subshift $X$ is called \textbf{ r-weakly mixing} if for any pairs of non-empty open sets $U_1, U_2$ and $V_1,V_2$ in $X$, there exists  $k\in\mathbb{Z}$ such that $\sigma^{kr} (U_i)\cap V_i\neq \emptyset~~$ for $i=1,2$. Once again, it may be noted that a subshift  $X$ is $r$-weakly mixing if the system $(X,\sigma^r)$ is weakly mixing (as a one dimensional system). A subshift $X$ is called \textbf{mixing} if for any non-empty open sets $U$ and $V$ in $X$, the set $\{n\in\mathbb{Z}^d: \sigma^n (U)\cap V\neq \emptyset \}$ is cofinite. Refer \cite{bruce,Hoch,Nat,TK} for details.\\

Let $G=(V,E)$ be a graph (where $V$ and $E$ are the set of vertices and edges respectively). It may be noted that the set of bi-infinite walks over the graph $G$ is a one dimensional (one step) shift of finite type. Also, as any given shift of finite type $X$ can be coded as a higher block shift (conjugate to $X$ and can be generated by a finite graph $G$), every one dimensional shift of finite type can be visualized as a shift generated from some graph. Also, if $\{G_1,G_2,\ldots,G_d\}$ is a set of graphs with a common set of vertices $V$, the collection naturally induces a $d$-dimensional shift of finite type (where $i$-th graph determines the compatibility of the vertices in the $i$-th direction). In such a case, we refer $(G_1,G_2,\ldots,G_d)$ as a \textbf{$d$-dimensional graph}. We say that a two dimensional graph $G=(H, V)$ is \textbf{connected} if for every pair of vertices $i , j \in \mathcal{V}(G)$, $\exists \ (r, s) \in (\mathbb{Z}^{+} \times \mathbb{Z}^{+}) \setminus\{0,0\}$ such that $(H^r V^s)_{ij} >0$. It may be noted that if $(HV)_{ij}\neq 0 \Leftrightarrow (VH)_{ij}\neq 0$ for all $i,j$, then any $1\times r$ strip can extended to a $2\times r$ (and hence to $n\times m$ block for any $n,m\in\mathbb{N}$) block allowed for the shift space. Consequently, the condition ensures extension of a given $r\times s$ block to a valid configuration for the shift space under discussion. See \cite{bruce,lind1,markley,markley1,pp} for details.\\ 

An $n\times n$ real, non-negative, square matrix $A$ is called \textbf{irreducible} if for any pair of indices $i,j\in \{1,2,\ldots,n\}$, there exists $k\in\mathbb{N}$ such that $A^k_{ij}\neq 0$. Such a matrix $A$ is called \textbf{primitive} if there exists $k\in\mathbb{N}$ such that $A^k_{ij}\neq 0~~ \forall i,j\in\{1,2,\ldots,n\}$. It can be seen that a one dimensional shift generated by the matrix $A$ is doubly transitive (strongly mixing) if and only if $A$ is irreducible (primitive). Further, it is known that any one dimensional shift of finite type $X$ is totally transitive $\Leftrightarrow X$ is weakly mixing  $\Leftrightarrow X$ is strongly mixing. Refer \cite{bruce,TK} for details.\\

Let $G_1$ and $G_2$ be two directed graphs. Then, the Cartesian (Tensor) product of $G_1$ and $G_2$ is defined as the graph with  $\mathcal{V}(G_1) \times \mathcal{V}(G_2)$ as the set of vertices. The vertices $(a,b)$ and $(c,d)$ are connected in \textbf{Cartesian Graph Product} (denoted by $G_1  \scalebox{0.7}{$\square$} G_2$), if $ a= c \ (ac \in \mathcal{E}(G_1))$ and $ bd \in \mathcal{E}(G_2) \ (b=d)$. In other words, two tuples are connected in Cartesian graph product if one of the indices (of the tuple) coincide and the other is connected (in their respective graphs). The vertices $(a,b)$ and $(c,d)$ are connected in \textbf{Tensor Graph Product} (denoted by $G_1 \times G_2$) if $ ac \in \mathcal{E}(G_1)$ and $bd \in \mathcal{E}(G_2)$. We define the graph product (Cartesian and Tensor) of two-dimensional graphs $G_1$ and $G_2$ to be \textbf{component-wise graph product}. See \cite{ sabi, paul} for details. \\

In this paper, we investigate the dynamical behavior of a two dimensional shift generated by the graph $G=(\mathcal{H},\mathcal{V})$). For the sake of simplicity, we use $H$ and $V$ for the adjacency matrices of $\mathcal{H}$ and $\mathcal{V}$ respectively. In particular, we investigate transitivity, $r$-transitivity, total transitivity, weakly mixing, $r$-weakly mixing and mixing for shift space under consideration. We prove that if $(HV)_{ij}\neq 0 \Leftrightarrow (VH)_{ij}\neq 0, ~~\forall i,j$, while doubly transitivity (weak mixing) of $X_H$ (or $X_V$) ensures the same for two dimensional shift generated by the graph $G$, irreducibility of $H^rV^s$ provides a necessary and sufficient criteria to establish directional doubly transitivity (in $(r,s)$ direction ($rs>0$)) for the multidimensional shift space under discussion. We provide examples to show that the results derived do not hold if any of the conditions imposed fails to hold. We also provide necessary and sufficient criteria to establish horizontal (vertical) transitivity for the shift space $X_G$. We also investigate the decomposability of a given graph into product of graphs with reduced complexity. \\

\section{Main Results}
\begin{Proposition}
Let $X_{G}$ be a shift space generated by the two dimensional graph  $G=(\mathcal{H},\mathcal{V})$. If $H$ and $V$ are irreducible permutation matrices such that $HV=VH$, then $X_{G}$ comprises of a single periodic point (with finite orbit).
\end{Proposition}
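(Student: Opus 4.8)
The plan is to exploit the elementary fact that an irreducible permutation matrix is precisely the matrix of a single cyclic permutation on all of the $n$ vertices. Write $H$ for the matrix of the $n$-cycle $\pi_H$ and $V$ for that of $\pi_V$, so $H_{ij}=1 \iff j=\pi_H(i)$ and similarly for $V$; the hypothesis $HV=VH$ then says exactly that $\pi_H$ and $\pi_V$ commute. Since $H$ is a permutation matrix, for each vertex $i$ there is a unique $j$ with $H_{ij}\neq 0$, so in any configuration $x\in X_G$ the symbol immediately to the right of a cell carrying $i$ is forced to be $\pi_H(i)$, and the symbol immediately above is forced to be $\pi_V(i)$. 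Iterating this forcing, $x(r,s)$ is completely determined by $x(0,0)$.

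The second step makes this precise and identifies $X_G$ as a finite set. For $a\in\{1,\dots,n\}$ define $x_a$ by $x_a(r,s)=\pi_H^{\,r}\pi_V^{\,s}(a)$ for $(r,s)\in\mathbb{Z}^2$; this is well defined because the two routes from the origin to $(r,s)$ agree (as $\pi_H,\pi_V$ commute) and because $\pi_H,\pi_V$ are invertible, so negative coordinates cause no difficulty. One checks directly that consecutive horizontal (resp. vertical) entries of $x_a$ are $H$-compatible (resp. $V$-compatible), so $x_a\in X_G$; conversely the forcing observation shows every $x\in X_G$ equals $x_{x(0,0)}$. Hence $X_G=\{x_1,\dots,x_n\}$ has exactly $n$ elements (the $x_a$ are distinct since $x_a(0,0)=a$), and being finite it is automatically closed, so there is nothing to check about the closure in the definition of $X_G$.

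The final step computes the shift action and reads off the conclusion. From the defining formula, $(\sigma^{(p,q)}x_a)(r,s)=x_a(r+p,s+q)=\pi_H^{\,r}\pi_V^{\,s}\bigl(\pi_H^{\,p}\pi_V^{\,q}(a)\bigr)=x_{\pi_H^{\,p}\pi_V^{\,q}(a)}(r,s)$, whence $\sigma^{(p,q)}x_a=x_{\pi_H^{\,p}\pi_V^{\,q}(a)}$. Since $\pi_H$ is an $n$-cycle, $\{\pi_H^{\,p}(a):p\in\mathbb{Z}\}=\{1,\dots,n\}$, so the $\mathbb{Z}^2$-orbit of any $x_a$ is all of $X_G$. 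Thus $X_G$ is a single orbit, it is finite, and every one of its points is periodic — which is the assertion.

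I do not expect a serious obstacle here; the argument is essentially bookkeeping. The one point requiring a little care is the structural remark that an irreducible permutation matrix must be a single full cycle (a permutation splits into disjoint cycles, and irreducibility forbids more than one), together with the verification that the family $\{x_a\}$ is exactly $X_G$: non-emptiness (each $x_a$ is locally admissible everywhere) and exhaustiveness (the forcing argument). If one prefers, the hypothesis $HV=VH$ makes the compatibility condition $(HV)_{ij}\neq 0 \Leftrightarrow (VH)_{ij}\neq 0$ hold trivially, so the extension remark in the introduction guarantees $X_G\neq\emptyset$ without an explicit construction.
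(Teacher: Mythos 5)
Your proposal is correct and follows essentially the same route as the paper's proof: unique neighbor forcing from the permutation property, a single configuration per choice of origin symbol via commutativity, and a single orbit via irreducibility. You simply make the paper's sketch fully explicit by identifying the irreducible permutation matrices with $n$-cycles and writing the configurations as $x_a(r,s)=\pi_H^{\,r}\pi_V^{\,s}(a)$, which is a welcome level of detail but not a different argument.
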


\begin{proof}
Let $X_{G}$ be shift space generated by the two dimensional graph $G=(\mathcal{H},\mathcal{V})$. Firstly, note that if $H$ and $V$ are permutation matrices, then the horizontal (vertical) neighbor of any symbol is uniquely determined. Further, if $HV=VH$, fixing a symbol at $(0,0)$ determines the symbols at all other positions in the plane and hence yields a unique configuration in the shift space $X_G$. Finally, as $H$ and $V$ are irreducible, any two symbols are connected by a horizontal (vertical) path. Consequently, the shift space comprises of a single periodic orbit (with finite orbit) and the proof is complete.
\end{proof}

\begin{Remark} \label{rem1}
The above proof provides a sufficient condition for the shift space to be finite (in fact a single periodic orbit) when the generating matrices commute with each other.  It may be noted that as the above arguments hold even when only one of the generating matrix is irreducible, the above result holds under a weaker condition (irreducibility of one of the generating matrices). In \cite{markley}, the authors establish transitivity of the shift space under irreducibility of one of the generating matrices when the generating matrices are ``consistent". However, as irreducibility of $H$ (or $V$) ensures connectivity of any two vertices through a horizontal (vertical) path, $(HV)_{ij}\neq 0 \Leftrightarrow (VH)_{ij}\neq 0$ for all $i,j$ ensures that any two allowed rectangular blocks can be connected (horizontally or vertically) and hence can be extended to a configuration in the shift space $X_G$. Consequently, the shift space $X_G$ is transitive for a bigger class of subshifts (of the full shift) and we get the following result.
\end{Remark}

\begin{Proposition}\label{prop2}
Let $X_{G}$ be a shift space generated by the two dimensional graph $G=(\mathcal{H},\mathcal{V})$ such that $(HV)_{ij}\neq 0 \Leftrightarrow (VH)_{ij}\neq 0$ holds for all $i,j$. If $H$ (or $V$) is irreducible then $X_G$ is transitive.
\end{Proposition}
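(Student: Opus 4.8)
The plan is to show that for any two non-empty open sets in $X_G$ one can find a shift that moves one into the other, using the fact that it suffices to check this on a basis of cylinder sets determined by allowed rectangular blocks. So first I would fix allowed blocks $B_1$ and $B_2$ (say of sizes $m_1\times n_1$ and $m_2\times n_2$), each occurring in some configuration of $X_G$, and reduce the problem to exhibiting a single configuration of $X_G$ containing a copy of $B_1$ and, suitably far away (in a position depending on the desired shift vector), a copy of $B_2$.

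Next I would invoke the hypothesis $(HV)_{ij}\neq 0 \Leftrightarrow (VH)_{ij}\neq 0$ for all $i,j$, which, as noted in the paragraph preceding Proposition \ref{prop2} and in Remark \ref{rem1}, guarantees that any allowed $1\times r$ strip extends to a $2\times r$ strip and hence to an allowed $n\times m$ block for every $n,m$; in particular each $B_k$ sits inside an allowed square block $C_k$, and more importantly each $C_k$ can be ``padded'' outward to arbitrarily large allowed blocks. The key step is then to connect the two enlarged blocks. Since $H$ (or, symmetrically, $V$) is irreducible, any vertex is joined to any other vertex by a horizontal (resp.\ vertical) path; applying this row-by-row (or column-by-column), I would glue a right-extension of $C_1$ to a left-extension of $C_2$ along a common horizontal corridor, obtaining an allowed block that contains $B_1$ on the left and $B_2$ on the right, at a controlled horizontal offset. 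Using once more that allowed blocks extend in all directions, this finite block extends to a full configuration $x\in X_G$. By choosing the lengths of the connecting paths appropriately one arranges the horizontal displacement between the copy of $B_1$ and the copy of $B_2$ to be whatever is needed; vertical alignment is handled the same way using $V$ (or by first padding vertically). Then the shift $\sigma^n$ by the appropriate vector $n\in\mathbb{Z}^2$ carries the cylinder around $B_1$ into the cylinder around $B_2$ at $x$, so $\sigma^n(U)\cap V\neq\emptyset$.

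The main obstacle I anticipate is bookkeeping rather than conceptual: one must make sure that when the horizontal paths of possibly different lengths are chosen for different rows, the resulting array is still \emph{globally} consistent in the vertical direction, i.e.\ that the vertical-adjacency constraints $V$ are not violated along the glued region. This is exactly where the equivalence $(HV)_{ij}\neq 0 \Leftrightarrow (VH)_{ij}\neq 0$ does the work: it lets one build the connecting block one row at a time while always being able to fill in a compatible next row, so that horizontal gluing never obstructs vertical extension. I would therefore carry out the argument by first reducing to a single row (a one-dimensional statement about the irreducible matrix $H$, which is classical), and then lifting to two dimensions via the extension property, keeping the displacement vector as a free parameter throughout. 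Finally I would remark that the case where $V$ is irreducible is identical after exchanging the roles of the two coordinate directions.
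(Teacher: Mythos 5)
Your proposal is correct and follows essentially the same route as the paper: use the irreducibility of $H$ to connect the two allowed (central) blocks by a horizontal path, then invoke the condition $(HV)_{ij}\neq 0 \Leftrightarrow (VH)_{ij}\neq 0$ to extend the resulting partial pattern to a full configuration, which witnesses $\sigma^{(r,s)}(U)\cap V\neq\emptyset$ for some $(r,s)$. The only (harmless) excess is your effort to control the displacement vector exactly and to glue row-by-row; since transitivity here only requires \emph{some} shift vector to work, the paper's single corner-to-corner path followed by the extension property already suffices, which is precisely the simplification you arrive at in your last paragraph.
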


\begin{proof}
Let $H$ be an irreducible matrix and let $x,y\in X_G$. For any central blocks $x_n= {\begin{array}{cccccccc}

 x_{-n,n}  & \hdots &  x_{n,n}  \\
\vdots  & \vdots & \vdots     \\
 x_{-n,-n}  & \hdots & x_{n,-n}      \\
	
	\end{array} } $ and $y_n= {\begin{array}{cccccccc}

 y_{-n,n}  & \hdots &  y_{n,n}  \\
\vdots & \vdots & \vdots     \\
 y_{-n,-n}  & \hdots & y_{n,-n}      \\
	
	\end{array} } $ of $x$ and $y$ respectively, irreducibility of $H$ ensures that $x_{n,n}$ and $y_{-n,-n}$ can be connected through a horizontal path. Consequently, there exists symbols $a_1,a_2,\ldots, a_k$ such that $${\begin{array}{cccccccccccccc}	
&&&&  y_{-n,n} & \hdots  &  y_{n,n}  \\
&&&&  \vdots & \vdots  & \vdots     \\
x_{-n,n} & \hdots  &  x_{n,n} & a_{1}\hdots \ a_{k} & y_{-n,-n}  & \hdots & y_{n,-n} \\
\vdots & \vdots  & \vdots  &&&& \\
x_{-n,-n} & \hdots  & x_{n,-n}  &&&&   \\

			\end{array} }$$\\
is an allowed pattern for the shift space $X_G$. Further, as $(HV)_{ij}\neq 0 \Leftrightarrow (VH)_{ij}\neq 0$ holds for all $i,j$, such an allowed pattern can be extended to a $(4n+k+2)\times (4n+k+2)$ square and hence to a configuration in $X_G$. As the argument holds for any central blocks of any size $n$,~ for any neighborhoods $U$ and $V$ of $x$ and $y$ respectively, there exists $r,s\in \mathbb{N}$ such that $\sigma^{(r,s)}(U)\cap V\neq \emptyset$. As the arguments hold for any $x,y\in X_G$, $X_G$ is transitive and the proof is complete.
\end{proof}

\begin{Remark} \label{rem4}
The above results establish non-emptiness and transitivity under suitable conditions for a two dimensional shift space. While Proposition $1$ establishes the shift space to comprise of a single periodic orbit when generating matrices are irreducible permutation matrices commuting with each other, Proposition $2$ provides sufficient conditions to establish transitivity for the two dimensional shift space respectively. However, the conditions derived are sufficient in nature and the conclusions derived may hold even in the absence of the conditions imposed. We now provide examples in support of our arguments.
\end{Remark}

\begin{ex}
	
	Let $X_{G}$ be a shift space generated by the adjacency matrices given below:
	
	$$\textit{H}= \bordermatrix{ & 1 & 2 & 3 & 4  \cr
		1 & 1 & 1 & 0 & 0   \cr
		2 & 1 & 0 & 0 & 0   \cr
		3 & 0 & 0 & 0 & 1   \cr
		4 & 0 & 0 & 1 & 0   \cr	
	}
	\ \ \ \ \ \ \ \ \
	\textit{V}= \bordermatrix{ & 1 & 2 & 3 & 4  \cr
		1 & 0 & 0 & 1 & 0   \cr
		2 & 0 & 0 & 0 & 1   \cr
		3 & 1 & 1 & 0 & 1   \cr
		4 & 0 & 1 & 0 & 0   \cr	
	}$$
	
	It may be noted that $H$ and $V$ are neither irreducible nor permutation matrices. Further, as $HV\neq VH$, none of the conditions (imposed in Proposition $1$) holds. However, as the shift space comprises of a single periodic orbit of the point
	
	$$ {\begin{array}{ccccccccccccccccccccccc}

			\ldots & \vdots & \vdots & \vdots & \vdots & \vdots  & \vdots & \vdots & \vdots   & \vdots & \vdots & \vdots & \vdots & \vdots & \vdots  & \vdots & \vdots & \vdots   & \ldots   \\
			
			\ldots & 1 & 2 & 1 & 2 & 1  & 2 & 1 & 2  & 1 & 2 & 1 & 2 & 1  & 2 & 1 & 2  & \ldots   \\
			
			\ldots & 3 & 4 & 3 & 4 & 3 & 4 & 3 & 	4 & 3 & 4 & 3 & 4 & 3 &	4 & 3 & 4 & \ldots    \\

			\ldots & 1 & 2 & 1 & 2 & 1  & 2 & 1 & 2  & 1 & 2 & 1 & 2 & 1  & 2 & 1 & 2  & \ldots   \\
			
			\ldots & 3 & 4 & 3 & 4 & 3 & 4 & 3 & 	4 & 3 & 4 & 3 & 4 & 3 &	4 & 3 & 4 & \ldots    \\			
			\ldots & 1 & 2 & 1 & 2 & 1  & 2 & 1 & 2  & 1 & 2 & 1 & 2 & 1  & 2 & 1 & 2  & \ldots   \\			
			\ldots & \vdots & \vdots & \vdots & \vdots & \vdots  & \vdots & \vdots & \vdots   &\vdots & \vdots & \vdots & \vdots & \vdots & \vdots  & \vdots & \vdots & \vdots   &  \ldots   \\	
			
	\end{array} } $$
	$X_G$ comprises of a single periodic point (even when none of the conditions imposed in Proposition $1$ hold).
\end{ex}

\begin{ex}\label{3}
Let $X_{G}$ be the shift space corresponding to graph $G=(\mathcal{H},\mathcal{V})$, where the adjacency matrices for the generating graph are given by

$$\textit{H}= \bordermatrix{ & 0 & 1 & 2 & e & f & g \cr
	0 & 0 & 1 & 0 & 0 & 0 & 0  \cr
	1 & 0 & 0 & 1 & 0 & 0 & 0  \cr
	2 & 1 & 0 & 0 & 1 & 0 & 0  \cr
	e & 0 & 0 & 1 & 0 & 1 & 0  \cr
	f & 0 & 0 & 0 & 0 & 0 & 1  \cr
	g & 0 & 0 & 0 & 1 & 0 & 0  \cr
}
\ \ \ \ \ \ \ \ \
\textit{V}= \bordermatrix{ & 0 & 1 & 2 & e & f & g \cr
	0 & 0 & 1 & 0 & 0 & 0 & 0  \cr
	1 & 0 & 0 & 1 & 0 & 0 & 0  \cr
	2 & 1 & 0 & 0 & 0 & 1 & 0  \cr
	e & 0 & 0 & 0 & 0 & 1 & 0  \cr
	f & 0 & 0 & 1 & 0 & 0 & 1  \cr
	g & 0 & 0 & 0 & 1 & 0 & 0  \cr
}$$
It may be noted that $H$ and $V$ are irreducible but $(HV)_{ij}\neq 0 \Leftrightarrow (VH)_{ij}\neq 0$ for all $i,j$ does not hold. Further, as the pattern $2e$ cannot be extended vertically to form a valid pattern for the shift space, any valid configuration for $X_G$ comprises either of $0,1,2$ or $e,f,g$ and hence the two dimensional shift space $X_G$ is not transitive.  Thus, the shift space may fail to be transitive even when each of the generating matrices are irreducible (It is worth mentioning that if $(HV)_{ij}\neq 0 \Leftrightarrow (VH)_{ij}\neq 0$ holds for all $i,j$, irreducibility of any of the generating matrices is sufficient to ensure transitivity of the two dimensional shift $X_G$).
\end{ex}


\begin{Cor}
	Let $X_{G}$ be a shift space generated by the two dimensional graph $G=(\mathcal{H},\mathcal{V})$ such that $(HV)_{ij}\neq 0 \Leftrightarrow (VH)_{ij}\neq 0$ holds for all $i,j$. If $X_H$ (or $X_V$) is weak mixing then $X_G$ exhibits weak mixing.
\end{Cor}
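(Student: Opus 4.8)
The plan is to mimic the argument of Proposition \ref{prop2}, replacing the single pair of central blocks by two pairs and using the weak mixing of $X_H$ (or $X_V$) to stitch them together simultaneously in the horizontal direction, then invoke the condition $(HV)_{ij}\neq 0 \Leftrightarrow (VH)_{ij}\neq 0$ to extend the resulting finite configuration to a full point of $X_G$. Concretely, suppose $X_H$ is weak mixing and let $U_1,U_2,V_1,V_2$ be non-empty open subsets of $X_G$; pick points $x^{(1)}\in U_1$, $x^{(2)}\in U_2$, $y^{(1)}\in V_1$, $y^{(2)}\in V_2$ and an $n$ large enough that the central $(2n{+}1)\times(2n{+}1)$ blocks $x^{(i)}_n$, $y^{(i)}_n$ determine membership in the respective open sets.

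The key step is the following. Read off from each central block its bottom row (a horizontal word allowed in $X_H$); call these $\xi^{(i)}$ (bottom row of $x^{(i)}_n$) and $\eta^{(i)}$ (bottom row of $y^{(i)}_n$), viewed as points/cylinders in $X_H$. Weak mixing of $X_H$ gives a single $k\in\mathbb{N}$ and horizontal bridge words $w^{(1)}, w^{(2)}$ of a common length $k$ with $\xi^{(i)} w^{(i)} \eta^{(i)}$ allowed in $X_H$ for both $i=1,2$ — this is exactly the statement that the one-dimensional shift $X_H$ is weak mixing, applied to the two pairs of cylinders. (If one insists on the column-version, apply the same reasoning to the top rows, or to whichever one-dimensional direction $H$ governs.) Placing $x^{(1)}_n$ and $y^{(1)}_n$ (resp. $x^{(2)}_n$ and $y^{(2)}_n$) side by side with the bridge $w^{(1)}$ (resp. $w^{(2)}$) in between along their bottom rows, one obtains $1\times(4n{+}k{+}2)$ horizontal strips that are allowed in $X_H$, hence — since the condition $(HV)_{ij}\ne 0\Leftrightarrow(VH)_{ij}\ne 0$ lets any $1\times r$ strip grow vertically — the blocks $x^{(i)}_n$ and $y^{(i)}_n$ can be realized inside a common allowed rectangular block $B_i$ of size $(4n{+}k{+}2)\times(4n{+}k{+}2)$ in which $x^{(i)}_n$ occupies a fixed left-hand position and $y^{(i)}_n$ a fixed right-hand position, the horizontal offset $(r,s)$ between them being the \emph{same} for $i=1$ and $i=2$ by construction (it depends only on $n$ and $k$). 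Finally, using the same condition once more, each $B_i$ extends to a configuration $z^{(i)}\in X_G$ with $z^{(i)}\in U_i$ and $\sigma^{(r,s)}(z^{(i)})\in V_i$; thus $\sigma^{(r,s)}(U_i)\cap V_i\neq\emptyset$ for $i=1,2$ with a common $(r,s)$, which is precisely weak mixing of $X_G$.

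The main obstacle is ensuring that the shift vector $(r,s)$ can be chosen \emph{identically} for both pairs — this is why plain transitivity of $X_H$ does not suffice and weak mixing is needed: weak mixing of the one-dimensional factor $X_H$ yields a common bridging length $k$ (equivalently, a common return time), and since the vertical coordinate of the offset is forced to $0$ (we glue along a single row) while the horizontal coordinate is $2n{+}k{+}1$, the vector $(r,s)=(2n{+}k{+}1,\,0)$ works uniformly for $i=1,2$. A minor point to check is that the vertical extension step genuinely produces, for a \emph{prescribed} interior rectangular block, \emph{some} legal completion to a square and then to a point — but this is exactly what is recorded in the discussion preceding Proposition \ref{prop2} (the remark that under $(HV)_{ij}\ne 0\Leftrightarrow(VH)_{ij}\ne 0$ any allowed rectangular block extends to a valid configuration), so no new work is required there.
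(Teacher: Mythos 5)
Your overall strategy is the one the paper intends: rerun the proof of Proposition \ref{prop2} for two pairs of open sets, with weak mixing of $X_H$ supplying bridge words of a \emph{common} length $k$ and hence a common shift vector for both pairs. That is the right key idea. The problem is the geometry of your gluing. You place $x^{(i)}_n$ and $y^{(i)}_n$ at the \emph{same height} and join only their bottom rows by $w^{(i)}$. The resulting pattern has a $2n\times k$ hole between the two blocks whose bottom row (the bridge), left column (the right edge of $x^{(i)}_n$) and right column (the left edge of $y^{(i)}_n$) are all prescribed. The hypothesis $(HV)_{ij}\neq 0\Leftrightarrow (VH)_{ij}\neq 0$ only licenses \emph{outward} extension of an allowed pattern, i.e.\ completing a corner where the new symbol is constrained on at most two adjacent sides. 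Filling a row of this hole from left to right, the last symbol must simultaneously sit above a prescribed bridge symbol, to the right of the symbol just placed, and to the left of the prescribed symbol on the edge of $y^{(i)}_n$; nothing in the hypothesis guarantees such a symbol exists. So the block $B_i$ you claim, containing $x^{(i)}_n$ and $y^{(i)}_n$ side by side at the same height, need not exist, and the step ``the blocks can be realized inside a common allowed rectangular block $B_i$'' is a genuine gap, not the minor point you defer to the discussion preceding Proposition \ref{prop2} (that discussion covers extension of a single rectangular block, not completion of a pattern with a pinched hole).

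The fix is to use the paper's staircase arrangement from Proposition \ref{prop2}: run the length-$k$ bridge horizontally from the top-right corner symbol $x^{(i)}_{n,n}$ to the bottom-left corner symbol $y^{(i)}_{-n,-n}$, so that $y^{(i)}_n$ sits strictly up and to the right of $x^{(i)}_n$ and every cell of the completion is only ever constrained on two adjacent sides. Since weak mixing of the one-dimensional shift of finite type $X_H$ (equivalently, primitivity of $H$) yields a single $k$ bridging both corner pairs, the offset between the centres is $(2n+k+1,\,2n)$ for both $i=1$ and $i=2$, and the rest of your argument goes through unchanged. With that correction your proof coincides with the paper's, which simply instructs the reader to repeat the argument of Proposition \ref{prop2} for pairs of open sets.
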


\begin{proof}
As proof of Proposition \ref{prop2} establishes that if $H$ (or $V$) is irreducible then $(HV)_{ij}\neq 0 \Leftrightarrow (VH)_{ij}\neq 0$ for all $i,j$ ensures transitivity of the shift space $X_G$. As a similar set of arguments can be repeated for any pair of open sets in the shift space, if $X_H$ is weakly mixing then $X_G$ is weak mixing (under the imposed condition).
\end{proof}

\begin{Remark} \label{rem3}
The above result proves that if the imposed conditions hold, then the two dimensional shift is weak mixing when the one dimensional shift generated by any of the generating matrices is weak mixing. However, the result holds strictly under the imposed conditions and may not be true otherwise even when the one dimensional shifts generated by each of the generating matrices are weak mixing. We now give an example in support of our claim.
\end{Remark}

\begin{ex}
Let $X_G$ be the two dimensional shift space generated by graph $G=(\mathcal{H},\mathcal{V})$, where corresponding adjacency matrices are

	$$\textit{H}= \bordermatrix{ & 0 & 1 & 2 \cr
		0 & 1 & 1 & 0 \cr
		1 & 0 & 0 & 1 \cr
		2 & 1 & 0 & 0 \cr 	
	}
	\ \ \ \ \ \ \ \ \
	\textit{V}= \bordermatrix{ & 0 & 1 & 2 \cr
		0 & 0 & 1 & 1 \cr
		1 & 1 & 0 & 0 \cr
		2 & 1 & 0 & 1 \cr 	
	}$$
Firstly, it may be noted that $H$ and $V$ are primitive matrices (hence generate one dimensional weak mixing shifts) and the condition $(HV)_{ij}\neq 0 \Leftrightarrow (VH)_{ij}\neq 0$ for all $i,j$ fails to hold. However, the two dimensional shift generated by $G$ comprises of a single periodic orbit generated by the point
	
	$$ {\begin{array}{ccccccccccccccccccccccc}
			
			\ldots & \vdots & \vdots & \vdots & \vdots & \vdots  & \vdots & \vdots & \vdots   & \vdots & \vdots & \vdots & \vdots & \vdots & \vdots  & \vdots & \vdots & \vdots   & \ldots   \\

			\ldots & 1 & 2 & 0 & 0 & 1  & 2 & 0 & 0  & 1 & 2 & 0 & 0 & 1  & 2 & 0 & 0  & \ldots   \\
			
			\ldots &	0 & 0 & 1 & 2 & 0 & 0 &	1 & 2 &	0 & 0 & 1 & 2 & 0 & 0 &	1 & 2 & \ldots    \\
			
%
			
			\ldots & 1 & 2 & 0 & 0 & 1  & 2 & 0 & 0  & 1 & 2 & 0 & 0 & 1  & 2 & 0 & 0  & \ldots   \\
			\ldots & \vdots & \vdots & \vdots & \vdots & \vdots  & \vdots & \vdots & \vdots   &\vdots & \vdots & \vdots & \vdots & \vdots & \vdots  & \vdots & \vdots & \vdots   &  \ldots   \\	
	\end{array} } $$\\
Consequently, the shift space $X_G$ does not exhibit weak mixing. Thus, the shift space $X_G$ generated by a graph $G$ need not exhibit weak mixing even when the shift spaces generated by each of the generating matrices are weak mixing.
\end{ex}

\begin{Proposition}\label{ttcc}
	Let $G=(H,V)$ be a two dimensional connected graph satisfying $(HV)_{ij} \neq 0 \Leftrightarrow (VH)_{ij} \neq 0$ for all $i, j$, then $X_G$ is transitive.
\end{Proposition}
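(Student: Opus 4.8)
The plan is to rerun the proof of Proposition~\ref{prop2} with a single substitution: where that argument uses irreducibility of $H$ to join two corner symbols of the central blocks by a horizontal path, here I would use connectedness of $G$ to join them by a monotone lattice staircase. First I would fix $x,y\in X_G$ and a radius $n$, and reduce transitivity to producing a point $z\in X_G$ together with a vector $m\in\mathbb{Z}^2$ such that $z$ agrees with $x$ on $[-n,n]^2$ and $\sigma^m z$ agrees with $y$ on $[-n,n]^2$ (cylinders of this shape form a basis for the topology). Let $i$ be the symbol of $x$ at the corner $(n,n)$ of its central block and $j$ the symbol of $y$ at the corner $(-n,-n)$ of its central block.

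Next I would invoke connectedness of $G$: there is $(r,s)\in(\mathbb{Z}^{+}\times\mathbb{Z}^{+})\setminus\{(0,0)\}$ with $(H^{r}V^{s})_{ij}>0$. Unravelled, this gives a walk in $G$ from $i$ to $j$ made of $r$ horizontal edges followed by $s$ vertical edges; drawn on the lattice, it is a monotone staircase of symbols. Pre-composing with a power of a closed walk at $i$ (one exists, again by connectedness) lengthens this staircase as much as we like in at least one coordinate direction, so I may assume its horizontal extent $p$ and vertical extent $q$ are large enough that the following placement is consistent: put $x$'s central block on $[-n,n]^2$, start the staircase at $(n,n)$ and let it end at $(n+p,n+q)$, and put $y$'s central block so that its corner symbol $j$ sits at $(n+p,n+q)$. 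With the extents large the two blocks occupy disjoint, non-adjacent regions, so the only adjacencies inside this finite pattern are those internal to the two blocks (allowed, since $x,y\in X_G$) and those along the staircase walk (allowed by construction); hence the pattern is allowed for $X_G$.

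It then remains to extend this allowed finite pattern to a point of $X_G$, and this is exactly the step the hypothesis $(HV)_{ij}\neq 0\Leftrightarrow(VH)_{ij}\neq 0$ is there to supply: as recalled in the introduction and used in the proof of Proposition~\ref{prop2}, under this condition any allowed strip may be grown, so the bounding rectangle of the pattern can be filled in and then extended to a full configuration $z\in X_G$. By construction $z$ agrees with $x$ on $[-n,n]^2$, and $y$'s central block occurs in $z$ on the box $(2n+p,2n+q)+[-n,n]^2$; so with $m=(2n+p,2n+q)$ the point $\sigma^m z$ agrees with $y$ on $[-n,n]^2$. Hence $\sigma^m(U)\cap W\neq\emptyset$ for the basic cylinder neighbourhoods $U\ni x$ and $W\ni y$ determined by $[-n,n]^2$, and as $n$, $x$, $y$ were arbitrary, $X_G$ is transitive.

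I expect the only load-bearing step to be the filling one---converting the allowed finite pattern into an honest configuration by means of the hypothesis---but this is precisely the mechanism already established for Proposition~\ref{prop2} and in the introductory remarks, so it should carry over with no new work. The side checks (that a long enough monotone staircase keeps the two blocks disjoint and non-adjacent, and that the staircase introduces no unintended forbidden adjacency with either block) are routine consequences of monotonicity of the staircase.
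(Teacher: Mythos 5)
Your proposal is correct and follows essentially the same route as the paper's proof: connect the top-right corner symbol of $x$'s central block to the bottom-left corner symbol of $y$'s central block by a monotone lattice path supplied by connectedness of $G$, then use the condition $(HV)_{ij}\neq 0 \Leftrightarrow (VH)_{ij}\neq 0$ to extend the resulting allowed pattern to a full configuration witnessing $\sigma^{(r,s)}(U)\cap V\neq\emptyset$. Your write-up merely supplies details (lengthening the staircase, checking non-adjacency of the two blocks) that the paper leaves implicit.
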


\begin{proof}
Let $U, V$ be $\frac{1}{n}$-neighborhoods of configurations $x, y \in X_G$. If $x_n$ and $y_n$ are central blocks of length $(2n+1)$ of the elements $x$ and $y$ respectively, as $G$ is connected, there exists a path from $x_{n,n}$ (top right corner of $x_n$) to $y_{-n,-n}$ (bottom left corner of $y_n$). As $(HV)_{ij} \neq 0 \Leftrightarrow (VH)_{ij} \neq 0$ (for all $i, j$), the extended pattern can further be extended to an allowed rectangular pattern in $\mathcal{B}(X_G)$ (and finally to a valid configuration $z \in X_G$). Thus, there exists $(r,s)\in (\mathbb{Z}^{+} \times \mathbb{Z}^{+})\setminus\{0,0\}$ such that $\sigma^{(r,s)}(U) \cap V \neq \emptyset$. Hence $X_G$ is transitive and the proof is complete.
\end{proof}

\begin{Proposition} \label{5}
Let $X_{G}$ be a shift space and let $(HV)_{ij}\neq 0 \Leftrightarrow (VH)_{ij}\neq 0$ holds for all $i,j$. Then, $X_{G}$ is doubly $(r,s)$-transitive ($rs>0)$ if and only if $H^{r}V^{s}$ is irreducible.
\end{Proposition}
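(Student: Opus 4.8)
The plan is to interpret doubly $(r,s)$-transitivity of $X_G$ as transitivity (for positive and negative times) of the one dimensional system $(X_G, \sigma^{(r,s)})$, and then to translate this into a combinatorial statement about the matrix $H^rV^s$, whose $(i,j)$ entry counts walks of length $r$ in $\mathcal{H}$ composed with walks of length $s$ in $\mathcal{V}$ — equivalently (using $(HV)_{ij}\neq 0 \Leftrightarrow (VH)_{ij}\neq 0$, which lets the order of horizontal and vertical steps be permuted freely inside an allowed pattern) the existence of a staircase path from vertex $i$ to vertex $j$ realized inside an allowed block of $X_G$. So I would set up the dictionary: a positive entry of $(H^rV^s)^k$ corresponds to a valid block of $X_G$ connecting a symbol at the origin to a symbol at $k(r,s)$, and conversely.

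For the ``if'' direction, assume $H^rV^s$ is irreducible. Given $\tfrac1n$-neighborhoods $U,V$ of configurations $x,y\in X_G$ with central blocks $x_n,y_n$, I would use irreducibility of $H^rV^s$ to find some $k\in\mathbb N$ and a walk (in the composed graph) from the appropriate corner symbol of $x_n$ to the appropriate corner symbol of $y_n$ whose total displacement is $k(r,s)$; since $rs>0$ this displacement moves strictly up-and-right, so for $n$ small relative to $k$ the two central blocks sit in disjoint regions and the connecting walk, interpreted as an $r$-step-then-$s$-step staircase repeated $k$ times, fills in a valid partial pattern. Then, exactly as in the proofs of Proposition \ref{prop2} and Proposition \ref{ttcc}, the hypothesis $(HV)_{ij}\neq 0 \Leftrightarrow (VH)_{ij}\neq 0$ lets me extend this partial pattern to an allowed rectangular block and then to a configuration $z\in X_G$ with $z\in U$ and $\sigma^{k(r,s)}(z)\in V$; this gives $\sigma^{k(r,s)}(U)\cap V\neq\emptyset$, i.e.\ transitivity of $\sigma^{(r,s)}$ for positive time. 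For transitivity at negative time one runs the same argument with the roles of $x$ and $y$ exchanged (again using irreducibility, which is symmetric in its index pair), so $(X_G,\sigma^{(r,s)})$ is doubly transitive.

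For the ``only if'' direction, suppose $H^rV^s$ is not irreducible: there exist vertices $i,j$ such that no power $(H^rV^s)^k$ has a positive $(i,j)$ entry. I would pick a configuration $x\in X_G$ containing the symbol $i$ at the origin (such a configuration exists because, under the standing hypothesis, any single symbol extends to a configuration) and a configuration $y\in X_G$ containing $j$ at the origin, take their $1$-neighborhoods $U,V$, and argue that $\sigma^{k(r,s)}(U)\cap V=\emptyset$ for every $k\geq 1$: a point in this intersection would exhibit, along the line through the origin in direction $(r,s)$, a valid staircase of $kr$ horizontal and $ks$ vertical steps from $i$ to $j$, forcing $(H^rV^s)^k_{ij}>0$, a contradiction. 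Hence $\sigma^{(r,s)}$ is not transitive for positive time, so $X_G$ is not doubly $(r,s)$-transitive, completing the contrapositive.

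The main obstacle, and the step I would write most carefully, is the bookkeeping in the ``if'' direction that converts an abstract walk counted by $(H^rV^s)^k$ into an honest allowed pattern of $X_G$ overlapping the prescribed central blocks: one must check that the staircase can be routed so as to meet $x_n$ only at the designated corner and $y_n$ only at its designated corner (which is where $rs>0$ and choosing $k$ large are used), and then invoke the extension property from the hypothesis to complete the pattern to a full configuration — exactly the mechanism already used in Propositions \ref{prop2} and \ref{ttcc}, so I would lean on those rather than redo the extension argument from scratch.
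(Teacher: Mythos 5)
Your proposal is correct and follows essentially the same route as the paper: the same dictionary between positive entries of $(H^rV^s)^k$ and allowed staircase patterns, the same use of the hypothesis $(HV)_{ij}\neq 0 \Leftrightarrow (VH)_{ij}\neq 0$ to extend a staircase to a full configuration in the ``if'' direction, and the same corner-symbol argument for the converse (which you phrase contrapositively while the paper argues directly, a purely cosmetic difference). The paper additionally disposes of the case $r,s<0$ by passing to transposes, which your remark about exchanging the roles of $x$ and $y$ covers in substance.
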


\begin{proof}
 Without loss of generality, assume that $r,s>0$ (as $rs>0)$. Let  $X_{G}$ be doubly $(r,s)$-transitive ($rs>0)$, $x,y\in X_G$ and $U, V$ be $\frac{1}{n}$-neighborhoods of $x$ and $y$ respectively. Let $x_n, y_n$ be central blocks (of size $2n+1$) of the elements $x$ and $y$ respectively. As $X_G$ is transitive in the direction $(r,s)$, there exists $u\in U$, $v\in V$ and $k\in\mathbb{N}$ such that $\sigma^{(kr,ks)}(u)=v$. As $U, V$ are $\frac{1}{n}$-neighborhoods of $x, y$ respectively, $x_n, y_n$ are central blocks of $u, v$ respectively and hence $(H^{r}V^{s})^{k}_{x_{n,n}y_{-n,-n}}\neq 0$. As the proof holds with central blocks of arbitrary sizes (and hence with arbitrary corner points), $H^{r}V^{s}$ is irreducible and the proof of forward part is complete. \\

Conversely, let $H^{r}V^{s}$ be irreducible ($r, s>0$), $x,y\in X_G$, $U$ and $V$ be $\frac{1}{n}$-neighborhoods of $x$ and $y$ respectively and $x_n, y_n$ be central blocks (of size $2n+1$) of the elements $x, y$ respectively. For any central blocks $x_n$ and $y_n$ (of size $2n+1$) of $x$ and $y$ respectively, irreducibility of $H^{r}V^{s}$ ensures existence of $k\in\mathbb{N}$ such that $(H^{r}V^{s})^{k}_{x_{n,n}y_{-n,-n}}\neq 0$. Consequently, there exists a pattern of the form $${\begin{array}{cccccccccccccc}	
		&& &&&&& y_{-n,n} & \hdots  &  y_{n,n}  \\
		&& &&&&& \vdots & \vdots  & \vdots     \\
		&& &&&&& y_{-n,-n}  & \hdots & y_{n,-n} \\
		
			&&&&&&& \vdots & \\
		&&&&&& \hdots  \\
		&&&&& \reflectbox{$\ddots$}  \\
		&&&& \vdots &&& \\
		x_{-n,n} & \hdots  &  x_{n,n} &  \hdots  &&&& \\
		\vdots & \vdots  & \vdots  &&&&& \\
		x_{-n,n} & \hdots  & x_{n,-n}  &&&&&   \\
\end{array} }$$\\
(where each $(\hdots)$ is a horizontal path of length $r$ and each $(\vdots)$ is a vertical path of length $s$) which is an allowed pattern for the shift space $X_G$. Further, as $(HV)_{ij}\neq 0 \Leftrightarrow (VH)_{ij}\neq 0$ holds for all $i,j$, such an allowed pattern can be extended to a configuration (say $z$) in $X_G$. Finally, note that $z\in U$ (as central block of $z$ is $x_n$) and $\sigma^{(kr,ks)}(z)\in V$ (as central block of $\sigma^{(kr,ks)}(z)$ is $y_n$), $X_G$ is doubly $(r,s)$-transitive and the converse also holds.

Finally, if $r,s<0$ (as $rs>0$) then arguments similar to above prove that $X_G$ is $(r,s)$-transitive  if and only if $(H^T)^{r}(V^T)^{s}$ is irreducible. Since $A$ is irreducible if and only if $A^T$ is irreducible,  $X_{G}$ is doubly $(r,s)$-transitive ($rs>0)$ if and only if $H^{r}V^{s}$ is irreducible and the proof is complete.
\end{proof}

\begin{Remark} \label{rem5}
The above result establishes the $(r,s)$-doubly transitivity of the two dimensional shift space under the irreducibility of the matrix $H^rV^s$. However, as the interactions among symbols may happen in negative time, irreducibility of $H^rV^s$ cannot be guaranteed under $(r,s)$-transitivity of the shift space under discussion. It may be noted that a similar set of arguments establish weak mixing in the $(r,s)$-direction when $H^rV^s$ is primitive (under the condition $(HV)_{ij}\neq 0 \Leftrightarrow (VH)_{ij}\neq 0$ for all $i,j$). We now establish our claims below. \\ \\

\begin{ex}
Let $X_G$ be a shift space arising from the following adjacency matrices:
	$$\textit{H}= \bordermatrix{ & 1 & 2 & 3 & 4  \cr
		1 & 0 & 1 & 0 & 0 \cr
		2 & 1 & 0 & 1  & 0 \cr
		3 & 0 & 0 & 0  & 1 \cr   
		4 & 0 & 0 & 1  & 0 \cr  
	}
	\ \ \ \ \ \
	\textit{V}= \bordermatrix{ & 1 & 2 & 3  & 4 \cr
		1 & 1 & 0 & 0 & 0   \cr
		2 & 0 & 1 & 0 & 0   \cr
		3 & 0 & 0 & 1 & 0   \cr
		4 & 0 & 0 & 0 & 1   \cr	
	}
	$$
Then, it can be seen that $X_{G}$ is (1,1)-transitive (is transitive along the line $y=x$). However, as
	$$\textit{HV=VH}=  \bordermatrix{ & 1 & 2 & 3 & 4  \cr
		1 & 0 & 1 & 0 & 0 \cr
		2 & 1 & 0 & 1  & 0 \cr
		3 & 0 & 0 & 0  & 1 \cr   
		4 & 0 & 0 & 1  & 0 \cr  
	}
	$$
The matrix $HV$ fails to be irreducible and hence $(r,s)$-transitivity of the shift space $X_G$ need not imply irreducibility of the matrix $H^{r}V^{s}$.
\end{ex}
\end{Remark}

\begin{Cor}
	Let $X_{G}$ be a shift space where condition $(HV)_{ij}\neq 0 \Leftrightarrow (VH)_{ij}\neq 0$ holds for all $i,j$. Then, $H^{r}{V}^{s}$ is primitive $\Leftrightarrow$ $X_{G}$ is weak mixing in direction $(r,s)$ ($ rs >0)$.
\end{Cor}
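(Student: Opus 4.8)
The plan is to prove this Corollary as the primitivity analogue of Proposition \ref{5}: I would mirror that argument, replacing ``irreducible'' by ``primitive'' and ``there is a connecting time $k$'' by ``there is a single $k$ serving both pairs of open sets.'' Throughout I would use the standing convention that every vertex of $G$ occurs in some configuration of $X_G$ (else pass to the essential subgraph), together with the elementary observation, forced by $(HV)_{ij}\neq 0 \Leftrightarrow (VH)_{ij}\neq 0$, that two matrices with the same support stay with the same support after left- or right-multiplication by $H$ (or $V$); hence every interleaving of $a$ copies of $H$ and $b$ copies of $V$ has the support of $H^aV^b$, in particular $(H^rV^s)^k$ has the support of $H^{kr}V^{ks}$, and a relation $\sigma^{(kr,ks)}(u)=v$ in $X_G$ forces $(H^rV^s)^k_{u(0,0)\,v(0,0)}\neq 0$ (read off a monotone staircase of $(r,s)$-steps inside $u$).

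For the forward implication I would argue as in the converse part of Proposition \ref{5}. Assuming $H^rV^s$ primitive, and (after replacing $H,V$ by transposes if $r,s<0$, using that $A$ is primitive iff $A^T$ is) that $r,s>0$, fix $k_0$ with $(H^rV^s)^k_{ij}>0$ for all vertices $i,j$ and all $k\geq k_0$. Given non-empty open $U_1,U_2,V_1,V_2$, choose representatives $x^{(m)}\in U_m$, $y^{(m)}\in V_m$ and an $n$ so that the $\frac1n$-neighborhoods of these four points lie inside the corresponding sets, with central blocks $x^{(m)}_n,y^{(m)}_n$. For any sufficiently large $k\geq k_0$ the entries $(H^rV^s)^k_{x^{(m)}_{n,n}\,y^{(m)}_{-n,-n}}$ are positive, so for each $m$ the construction of Proposition \ref{5} produces an allowed staircase pattern joining $x^{(m)}_n$ to a translate of $y^{(m)}_n$, which (using $(HV)_{ij}\neq 0 \Leftrightarrow (VH)_{ij}\neq 0$) extends to $z^{(m)}\in X_G$ with $z^{(m)}\in U_m$ and $\sigma^{(kr,ks)}(z^{(m)})\in V_m$; the same $k$ works for both $m$, so $X_G$ is weakly mixing in direction $(r,s)$. (In fact every large enough $k$ works, so $X_G$ is even mixing in that direction.)

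For the converse I would reason as follows. If $X_G$ is a single point then $H^rV^s=[1]$ is primitive, so assume $X_G$ has more than one point; being weakly mixing it is then perfect, so $X_G\times X_G$ is perfect, and transitivity of $(X_G\times X_G,\sigma^{(r,s)}\times\sigma^{(r,s)})$ on this perfect space — which is precisely weak mixing in direction $(r,s)$ — yields, for any two pairs $U_1,V_1$ and $U_2,V_2$ of non-empty open sets, a common $k\geq 1$ with $\sigma^{(kr,ks)}(U_m)\cap V_m\neq\emptyset$ for $m=1,2$. Taking the $U_m,V_m$ to be neighborhoods prescribing symbols $\alpha_m,\beta_m$ at the origin, the staircase observation of the first paragraph gives: for all vertices $\alpha_1,\beta_1,\alpha_2,\beta_2$ there is a common $k\geq1$ with $(H^rV^s)^k_{\alpha_1\beta_1}>0$ and $(H^rV^s)^k_{\alpha_2\beta_2}>0$. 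Writing $A=H^rV^s$, the choice $(\alpha_1,\beta_1,\alpha_2,\beta_2)=(\alpha,\beta,\beta,\alpha)$ shows $A$ is irreducible, and if its period $p$ were $\geq2$ the choice $(\alpha,\alpha,\alpha,\beta)$ with $\alpha,\beta$ in consecutive cyclic classes would force a $k$ with $k\equiv0$ and $k\equiv1\pmod p$, a contradiction; hence $p=1$ and $H^rV^s$ is primitive.

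The hard part will be the converse direction, which needs three things not already present in Proposition \ref{5}: producing a \emph{positive} common hitting time from weak mixing (this is exactly why perfectness of $X_G$ must be invoked), matching the $\sigma^{(r,s)}$-dynamics with powers of $H^rV^s$ (routine, but resting on the support identity above), and the cyclic-class argument that turns ``every pair of vertices is joined by a common power of the irreducible matrix $A$'' into aperiodicity of $A$. A shorter but less self-contained alternative would be to combine Proposition \ref{5} with the quoted equivalences (for one dimensional shifts of finite type, total transitivity $=$ weak mixing $=$ strong mixing; and $A^k$ irreducible for all $k\geq1$ iff $A$ primitive), observing that $X_G$ is weakly mixing in direction $(r,s)$ iff $(X_G,\sigma^{(kr,ks)})$ is transitive for every $k\geq1$ iff $(H^rV^s)^k$ is irreducible for every $k\geq1$; that route, however, requires first justifying that $(X_G,\sigma^{(r,s)})$ may be treated as a one dimensional shift of finite type.
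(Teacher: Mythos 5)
Your proposal is correct, and its forward half (primitive $\Rightarrow$ directional weak mixing) is exactly what the paper intends: the paper's ``proof'' of this corollary is a one-line pointer to Remark \ref{rem5}, which in turn says only that ``a similar set of arguments'' to Proposition \ref{5} works, i.e.\ replace irreducibility by primitivity and observe that a single large $k$ serves both pairs of open sets. Where you genuinely diverge is the converse. The paper never sketches how directional weak mixing yields primitivity, and this direction is not a routine clone of Proposition \ref{5}: weak mixing hands you, for each quadruple of open sets, \emph{some} common time $k\in\mathbb{Z}$, whereas primitivity demands one power with \emph{all} entries positive. You close that gap with three ingredients the paper omits --- (i) upgrading $k\in\mathbb{Z}$ to $k\geq 1$ via perfectness of $X_G$ (correctly disposing of the one-point case first, and this is also why the corollary can be stated for plain directional weak mixing while Proposition \ref{5} needs ``doubly'' transitivity); (ii) the support identity for interleavings of $H$ and $V$, which legitimately converts $\sigma^{(kr,ks)}$-returns into positivity of entries of $(H^rV^s)^k$; and (iii) the cyclic-class argument showing that common positive hitting times for the quadruples $(\alpha,\beta,\beta,\alpha)$ and $(\alpha,\alpha,\alpha,\beta)$ force irreducibility and aperiodicity, hence primitivity. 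Your version is therefore more complete and more careful than the paper's; the only cost is length, and your closing alternative (reduce to the one-dimensional equivalence of weak mixing and mixing for $(X_G,\sigma^{(r,s)})$) would be shorter but, as you note, needs the extra justification that this directional system is itself a one-dimensional shift of finite type.
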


\begin{proof}
	The proof follows from the discussions in the Remark \ref{rem5}.
\end{proof}

\begin{Proposition} \label{tr}
Let $X_G$ be a one dimensional shift space generated by a one dimensional graph $G$ (with adjacency matrix $A$). Then $X_G$ is transitive $\Leftrightarrow$ $A$ is irreducible or $A$ has exactly two irreducible, permutation sub-matrices $A_{1}, A_{2}$ such that for any $i \in A_{1}$, $j \in A_{2}$, there exists a unique path connecting the two vertices (which traverses through all the vertices of $A \setminus (A_1 \cup A_2)$).
\end{Proposition}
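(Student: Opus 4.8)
\emph{Setup and strategy.} Passing to the essential subgraph of $G$ (the vertices and edges that occur in some bi-infinite walk) changes neither $X_G$ nor the hypotheses, so I assume $G$ is essential. I will use two standard reformulations of transitivity of a subshift: it is equivalent to the existence of a point with dense orbit, and it is equivalent to the condition that for any two allowed words $u,v$ there is an allowed word containing both $u$ and $v$. The whole argument is a translation between these and the structure of the strongly connected components (SCCs) of $G$, partially ordered by reachability (the condensation DAG).

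\emph{Sufficiency.} If $A$ is irreducible this is the fact quoted in the introduction: irreducible $\Rightarrow$ doubly transitive $\Rightarrow$ transitive. Otherwise the stated hypotheses force $A_1$ to be the unique initial SCC of $G$, $A_2$ the unique terminal SCC, all remaining SCCs to be trivial (a single vertex, no loop), and the vertices of $A\setminus(A_1\cup A_2)$ to line up as a single simple path $e\to v_1\to\cdots\to v_m\to f$ with $e$ a vertex of $A_1$ and $f$ a vertex of $A_2$, along which every connection from $A_1$ to $A_2$ must run. Let $z$ be the bi-infinite walk that cycles around $A_1$ for all negative coordinates, crosses along $e,v_1,\dots,v_m,f$, and cycles around $A_2$ for all large positive coordinates. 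Since $A_1$ receives no external edges, $A_2$ sends no external edges, and the middle part is acyclic, every bi-infinite walk is entirely inside $A_1$, entirely inside $A_2$, or a shift of $z$; hence $X_G=\mathcal O(z)\cup\mathcal O(p_1)\cup\mathcal O(p_2)$, where $p_i$ is the $A_i$-periodic point. As $\sigma^n z$ accumulates on the full orbit $\mathcal O(p_1)$ when $n\to-\infty$ and on $\mathcal O(p_2)$ when $n\to+\infty$, we get $X_G=\overline{\mathcal O(z)}$, so $X_G$ is transitive.

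\emph{Necessity.} Assume $X_G$ transitive and $A$ not irreducible. If $G$ had two terminal SCCs $T_1,T_2$, then a long word $u$ inside $T_1$ and a long word $v$ inside $T_2$ could not both occur in any allowed word (once a walk enters a terminal SCC it never leaves it), contradicting transitivity; so there is a unique terminal SCC $T$, and dually a unique initial SCC $S$. If $S=T$ then every vertex is reachable from $S$ and reaches $T=S$, hence lies in the SCC of $S$, making $A$ irreducible; so $S\neq T$. Fix a point $z$ with dense orbit. Because $S$ has no incoming and $T$ no outgoing external edges (and $z$ must approximate both the $S$-periodic and $T$-periodic points), $\{n:z_n\in S\}=(-\infty,\alpha]$ and $\{n:z_n\in T\}=[\beta,\infty)$ with $\alpha<\beta$; so $z$'s ``crossing window'' $(\alpha,\beta)$ is a fixed finite interval and $z$ meets vertices outside $S\cup T$ only there. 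The key lemma is: every heteroclinic point of $X_G$ (every bi-infinite walk that is not eventually cyclic on both sides) is a shift of $z$. Indeed, if $x$ is heteroclinic with its whole crossing inside a window $[-L,L]$ (so $x_{-L}$ lies in $S$ and $x_L$ lies in $T$) and $x=\lim_k\sigma^{n_k}z$, then for large $k$ we get $z_{n_k-L}\in S$ and $z_{n_k+L}\in T$, forcing $n_k\in[\beta-L,\alpha+L]$; this is a finite set, so $n_k$ is eventually constant and $x\in\mathcal O(z)$.

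\emph{Reading off the structure, and the main obstacle.} The key lemma forces everything. A cycle inside an intermediate SCC would give a periodic point of $X_G=\overline{\mathcal O(z)}$, hence words of arbitrary length inside that SCC occurring in $z$, forcing $z$ to stay there for arbitrarily long runs — impossible, since $z$'s time outside $S\cup T$ is confined to the finite window. So the only cycles of $G$ lie in $S$ and in $T$, whence these are the only irreducible permutation submatrices, \emph{provided} $S$ and $T$ are themselves single cycles. But if $S$ were not a single cycle, then $X_{A|_S}$ would be infinite and would contain two non-shift-equivalent backward rays $\rho\neq\rho'$ ending at a common vertex $a$; completing both by a fixed simple path $a\leadsto T$ and a $T$-ray produces two non-shift-equivalent heteroclinic points, contradicting the key lemma. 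So $S$ (and dually $T$) is a cycle, i.e.\ $A_1:=A|_S$ and $A_2:=A|_T$ are irreducible permutation submatrices and are the only ones. Finally, for any vertex $i$ of $A_1$, any vertex $j$ of $A_2$, and any path $P$ from $i$ to $j$, the walk (ray around $A_1$ into $i$)$\cdot P\cdot$(ray around $A_2$ out of $j$) is heteroclinic, hence a shift of $z$; comparing it with the crossing window of $z$ shows $P$ must go around $A_1$ to $z$'s exit vertex, then along $z$'s crossing path, then around $A_2$ to $j$ — and that this crossing path visits every intermediate vertex (apply the key lemma to one heteroclinic point through each such vertex). This yields uniqueness of $P$ and the fact that it traverses all of $A\setminus(A_1\cup A_2)$. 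I expect the real difficulty to be precisely the key lemma and its consequences — the rigidity statement that a dense orbit in a \emph{reducible} SFT can cross over only once — together with keeping track of the essential/non-essential reduction and of the correct (non-directional) word characterization of transitivity, which is exactly what permits a one-way bridge.
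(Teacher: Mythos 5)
Your proposal is correct and, on the harder (``only if'') direction, takes a genuinely different route from the paper. The paper works entirely at the level of allowed words: it fixes a pair $(i,j)$ with no path from $i$ to $j$, uses the two-sided co-occurrence characterization of transitivity to argue that two distinct blocks of the form $j\cdots i$ would force a forbidden path from $i$ to $j$, concludes that the connecting block is unique, and then compresses the remaining structural analysis into ``fixing the position of $j$ fixes the entire sequence.'' You instead pass to the condensation into strongly connected components, extract a point $z$ with dense orbit, and prove a rigidity lemma --- $z$ leaves the unique initial component $S$ and enters the unique terminal component $T$ inside one fixed finite window, so every point of $X_G$ that meets both $S$ and $T$ is a shift of $z$ --- from which the permutation property of $A_1,A_2$, the triviality of the intermediate components, and the uniqueness of the crossing path all follow. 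Both arguments hinge on the same phenomenon (one-way traffic between components makes a second, independent crossing incompatible with transitivity), but yours actually delivers the full structural conclusion that the paper only asserts, at the cost of length. The sufficiency direction is essentially identical in the two proofs: the space is the closure of a single heteroclinic orbit together with the two periodic orbits it accumulates on, hence transitive.

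Two loose ends are worth tightening. First, where you complete two distinct backward rays $\rho\neq\rho'$ at a common vertex $a$ by the same forward continuation, you should say why the resulting points are not shifts of one another: both have their last $S$-coordinate at the same position, and a nonzero shift moves that position, so the key lemma is genuinely contradicted. Second, ``unique path'' can only mean unique \emph{simple} path (equivalently, unique first-crossing block), since a walk from $i\in A_1$ to $j\in A_2$ may wind around either cycle arbitrarily often; this ambiguity sits in the statement itself and infects the paper's own proof (which likewise claims a unique block $ja_1\cdots a_ri$), but your step ``comparing $P$ with the crossing window of $z$'' silently pins down the starting position of $P$ inside $z$, which is exactly where the winding ambiguity hides.
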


\begin{proof}
	
Let $X_G$ be a one dimensional transitive shift space generated by a one dimensional graph $G$ (with adjacency matrix $A$). Note that if $A$ is not irreducible then there exists $i,j\in\{1,2,\ldots,n\}$ such that $A^k_{ij}=0~~ \forall k\in\mathbb{N}$. However,  as $X_G$ is transitive, there exists a block of the form $ja_1a_2\ldots a_r i$ allowed for the shift space. Further, as existence of two blocks of the form $ja_1a_2\ldots a_r i$ ensures $A^k_{ij}\neq 0$ for some $k\in\mathbb{N}$ (as $X_G$ is transitive), there exists a unique block of the form $ja_1a_2\ldots a_r i$. Thus, fixing position of $j$ fixes every position before $j$ in a unique manner. Also, as existence of two blocks of the form $ja_1a_2\ldots a_r i b_1b_2\ldots b_m$ ensures $A^k_{ij}\neq 0$ for some $k\in\mathbb{N}$ (again by transitivity of $X_G$), fixing position of $j$ fixes the entire sequence and the proof of forward part is complete.\\
	
Conversely, if hypothesis for the converse holds then any non-periodic point in the shift space is of the form $\overline{x_1x_2\ldots x_k} j c_1 c_2\ldots c_l i \overline{y_1y_2\ldots y_m}$  Also, as $\overline{x_1x_2\ldots x_k}$ and $\overline{y_1y_2\ldots y_m}$ are the only other elements in the shift space, any two blocks allowed for the shift space interact over time and the shift space is transitive.   	
\end{proof}

\begin{Remark}\label{rem6}
The above result characterizes the transitivity for a one dimensional shift space generated by a graph $G$. However, as directional shifts for a multidimensional shift spaces can be visualized as a one dimensional shift, a similar result can be derived to characterize the transitivity (in a particular direction) for a multidimensional shift space. In particular, for a two dimensional shift space generated by $G=(\mathcal{H},\mathcal{V})$, if $(HV)_{ij}\neq 0 \Leftrightarrow (VH)_{ij}\neq 0$ holds (for all $i,j$) then a similar argument establishes $(r,s)$-transitivity for the shift space if $H^rV^s$ can be represented using the similar block form (as derived in Proposition \ref{tr}). Thus we get the following corollary.
\end{Remark}

\begin{Cor}
Let $X_{G}$ be a two dimensional shift space generated by the graph $G=(\mathcal{H},\mathcal{V})$ such that $(HV)_{ij}\neq 0 \Leftrightarrow (VH)_{ij}\neq 0$ holds for all $i,j$. Then $X_{G}$ is $(r,s)$-transitive $\Leftrightarrow$ $H^rV^s$ is irreducible or $H^rV^s$ has exactly two irreducible, permutation sub-matrices $A_{1}, A_{2}$ such that for any $i \in A_{1}$, $j \in A_{2}$, there exists a unique path connecting the two vertices (which traverses through all the vertices of $H^rV^s \setminus (A_1 \cup A_2)$).
\end{Cor}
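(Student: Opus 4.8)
The plan is to reduce the statement to the one‑dimensional characterization of Proposition \ref{tr}, exactly as anticipated in Remark \ref{rem6}. We may assume $r,s>0$: the case $r,s<0$ (with $rs>0$) is handled by passing to transposes precisely as in the last paragraph of the proof of Proposition \ref{5}, and a matrix $A$ has the block form described in Proposition \ref{tr} if and only if $A^{T}$ does (with the roles of $A_1,A_2$ interchanged). Put $M:=H^{r}V^{s}$. The standing hypothesis $(HV)_{ij}\neq 0\Leftrightarrow (VH)_{ij}\neq 0$ (for all $i,j$) forces every reordering of a product of $r$ copies of $H$ and $s$ copies of $V$ to have the same support; hence $(M^{k})_{ab}\neq 0$ holds exactly when the vertex $a$ can be joined to the vertex $b$ by a ``staircase'' built from $k$ consecutive blocks, each block a path of $r$ horizontal followed by $s$ vertical edges, and (again by the hypothesis) exactly when there is an allowed pattern of $X_G$ carrying $a$ at the origin and $b$ at position $(kr,ks)$ --- this is the same fattening‑and‑completing mechanism used in Propositions \ref{prop2} and \ref{5}. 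Let $Y$ be the one‑dimensional shift with adjacency matrix $M$, its alphabet being the set of vertices of $G$ that occur in some element of $X_G$ (equivalently, the vertices lying on a bi‑infinite walk of $M$).

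The key step is to prove that $X_G$ is $(r,s)$‑transitive if and only if $Y$ is transitive. For the forward implication, every allowed word of $Y$ reads off a diagonal staircase and hence an allowed block of $X_G$; applying $(r,s)$‑transitivity of $X_G$ to the associated cylinders and reading the $(r,s)$‑diagonal of a witnessing configuration yields a connecting word in $Y$, so $Y$ is transitive. For the converse, given $\frac{1}{n}$‑neighborhoods $U,V$ of $x,y\in X_G$ with central blocks $x_n,y_n$, transitivity of $Y$ produces $k\in\mathbb{Z}$ with $(M^{|k|})_{x_{n,n},y_{-n,-n}}\neq 0$ (interchanging the roles of $x,y$ and using $M^{T}$ if $k<0$); one then assembles $x_n$, the staircase realizing this corner‑to‑corner connection, and $y_n$ into a single allowed pattern, and invokes $(HV)_{ij}\neq 0\Leftrightarrow (VH)_{ij}\neq 0$ to extend it to a configuration $z\in X_G$, so that $z\in U$ and $\sigma^{(kr,ks)}(z)\in V$. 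This is the two‑dimensional analogue of the proof of Proposition \ref{5}, with ``irreducibility of $H^{r}V^{s}$'' replaced by ``transitivity of $Y$''.

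It then remains only to apply Proposition \ref{tr} to the one‑dimensional graph with adjacency matrix $M=H^{r}V^{s}$: $Y$ is transitive if and only if $M$ is irreducible, or $M$ has exactly two irreducible permutation submatrices $A_1,A_2$ such that for any $i\in A_1$, $j\in A_2$ there is a unique path joining them which traverses every vertex of $M\setminus(A_1\cup A_2)$. Combined with the key step, this gives the asserted equivalence.

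The main obstacle is the key step, i.e.\ making the passage between the $(r,s)$‑directional dynamics of $X_G$ and the one‑dimensional shift $Y$ fully rigorous (note that $(X_G,\sigma^{(r,s)})$ merely factors onto $Y$ rather than being conjugate to it, so the two implications must be argued separately). Two points require care. First, one must justify that $(H^{r}V^{s})^{k}$ genuinely counts the corner‑to‑corner connections of shape $(kr,ks)$ and that every such diagonal staircase can be fattened to a rectangular block and then completed to a configuration --- both of which rest squarely on the hypothesis $(HV)_{ij}\neq 0\Leftrightarrow (VH)_{ij}\neq 0$, which is exactly what makes all orderings of the product interchangeable and every allowed strip extendable. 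Second, there is the bookkeeping for negative $k$ (reduced to transposes as in Proposition \ref{5}) and for vertices supporting no bi‑infinite walk (these are simply absent from the alphabet of $Y$, and are invisible to both irreducibility and the block form). Once the key step is in place, the ``unique path through all the remaining vertices'' clause transfers verbatim, being literally the conclusion of Proposition \ref{tr} for the matrix $M$.
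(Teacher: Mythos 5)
Your proposal is correct and follows essentially the same route as the paper: the paper's own proof is just a pointer to Remark \ref{rem6}, which reduces $(r,s)$-transitivity to the one-dimensional characterization of Proposition \ref{tr} applied to $H^rV^s$, using the hypothesis $(HV)_{ij}\neq 0\Leftrightarrow (VH)_{ij}\neq 0$ to pass between diagonal staircases and full configurations exactly as in Propositions \ref{prop2} and \ref{5}. Your write-up in fact supplies more detail than the paper does (the explicit factor onto the one-dimensional shift $Y$, the separate treatment of the two implications, and the bookkeeping for negative $k$ and stranded vertices), all of it consistent with the paper's argument.
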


\begin{proof}
The proof follows from discussions in Remark \ref{rem6}.
\end{proof}



%
%

\begin{Proposition}
Let $X_{G}$ be a two dimensional shift space and let $(HV)_{ij}\neq 0 \Leftrightarrow (VH)_{ij}\neq 0$ holds for all $i,j$. If $H$ or $V$ are irreducible matrices, then $X_{G}$ is doubly $(r,s)$-transitive (for some direction  $(r,s)\in\mathbb{N}\times\mathbb{N} $).
\end{Proposition}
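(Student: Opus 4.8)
The plan is to reduce the statement to a matrix fact via Proposition \ref{5}, which says that under the standing hypothesis $(HV)_{ij}\neq 0\Leftrightarrow(VH)_{ij}\neq 0$ the shift $X_{G}$ is doubly $(r,s)$-transitive (for $rs>0$) exactly when $H^{r}V^{s}$ is irreducible; hence it suffices to produce $r,s\in\mathbb{N}$ with $H^{r}V^{s}$ irreducible. Since both the hypothesis and the conclusion are symmetric in $H$ and $V$, I may assume $H$ is irreducible, say of period $p$, and I will in fact find a pair of the form $(1,\ell)$. A preliminary observation I would use: since $H,V$ are non-negative, exchanging an adjacent $HV\leftrightarrow VH$ inside any word over $\{H,V\}$ does not change its zero/non-zero pattern, so any two such words with the same number of $H$'s and the same number of $V$'s have the same pattern; in particular $(HV^{\ell})^{k}$, $H^{k}V^{k\ell}$ and $V^{k\ell}H^{k}$ all share a single non-zero pattern.

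Next I would show that every vertex of $\mathcal{V}$ has $V$-out-degree at least $1$. If the set $S$ of vertices with a zero $V$-row were non-empty, then for $i\in S$ the $i$-th row of $VH$ vanishes, hence by the hypothesis so does the $i$-th row of $HV$, forcing every $H$-successor of $i$ into $S$; irreducibility of $H$ then spreads this to all vertices, giving $V=0$, whence no configuration can have a vertical neighbour and $X_{G}=\emptyset$, which is impossible for a shift space. Consequently $\mathcal{V}$ contains a cycle through some vertex $v_{0}$, of some length $\ell\ge 1$, so $(V^{\ell})_{v_{0}v_{0}}\neq 0$ and therefore $(V^{k\ell})_{v_{0}v_{0}}\neq 0$ for every $k\in\mathbb{N}$.

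It then remains to verify that $HV^{\ell}$ is irreducible, for which I would show that in the digraph of $HV^{\ell}$ the vertex $v_{0}$ is reached from, and reaches, every vertex. I would invoke the standard fact that for an irreducible matrix of period $p$ one has $(H^{k})_{ab}\neq 0$ whenever $k$ is large and its residue modulo $p$ matches the one determined by the cyclic classes of $a$ and $b$. Given a vertex $i$, take such a large $k$ connecting $i$ to $v_{0}$; then $(H^{k})_{iv_{0}}\neq 0$ and $(V^{k\ell})_{v_{0}v_{0}}\neq 0$ give $(H^{k}V^{k\ell})_{iv_{0}}\neq 0$, hence $\big((HV^{\ell})^{k}\big)_{iv_{0}}\neq 0$ by the pattern observation, so $v_{0}$ is reachable from $i$. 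Symmetrically, using the word $V^{k\ell}H^{k}$ and a large $k$ connecting $v_{0}$ to $j$, one gets $\big((HV^{\ell})^{k}\big)_{v_{0}j}\neq 0$, so $v_{0}$ reaches every $j$. Thus $HV^{\ell}$ is irreducible, and Proposition \ref{5} with $(r,s)=(1,\ell)$ (so $rs=\ell>0$) yields that $X_{G}$ is doubly $(1,\ell)$-transitive, completing the proof.

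I expect the middle step to be the real obstacle: extracting from the compatibility hypothesis enough structure on $\mathcal{V}$ --- a vertical cycle whose length one can exploit --- to control the powers $V^{s}$, since it is precisely the compatibility condition that couples the otherwise independent matrices $H$ and $V$. Without it the conclusion fails: $X_{G}$ can collapse to a single periodic orbit even when $H$ and $V$ are both primitive, as the example following Remark \ref{rem3} shows. The modular bookkeeping in the last step is routine once such a cycle is in hand.
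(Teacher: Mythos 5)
Your proof is correct. It rests on the same two ingredients as the paper's argument --- a closed vertical walk at some vertex and the irreducibility of $H$ --- but the execution is genuinely different. The paper fixes a vertex $i$ with $V^m_{ii}\neq 0$ (asserting non-emptiness of $X_V$), takes a closed $H$-walk through $i$ visiting every vertex, builds an explicit staircase pattern by diagonal repetition, and reads off irreducibility of $H^{r+1}V^m$ from that picture, landing in the direction $(r+1,m)$. You instead isolate the purely algebraic content of the compatibility hypothesis: since the support of a product of non-negative matrices depends only on the supports of the factors, $(HV)_{ij}\neq 0\Leftrightarrow(VH)_{ij}\neq 0$ lets you permute the letters of any word in $H,V$ without changing its support, so $(HV^{\ell})^{k}$, $H^{k}V^{k\ell}$ and $V^{k\ell}H^{k}$ all coincide in support; routing every vertex through $v_{0}$ then gives irreducibility of $HV^{\ell}$, and Proposition \ref{5} finishes in the direction $(1,\ell)$. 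Your route buys a cleaner and more verifiable argument (the period-$p$ refinement you invoke is not even needed: bare irreducibility of $H$ already supplies some $k\geq 1$ with $(H^{k})_{iv_{0}}\neq 0$, which is all the word-rearrangement step requires), and your preliminary derivation of a vertical cycle from the hypothesis together with non-emptiness of $X_G$ is more careful than the paper's unargued claim that $X_V$ is non-empty. The only costs are the reliance on Proposition \ref{5} (whose ``if'' direction you need and which the paper does prove), whereas the paper's construction is self-contained, and that your direction $(1,\ell)$ differs from the paper's $(r+1,m)$ --- immaterial, since the statement only asks for some direction.
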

\begin{proof}
Let $X_G$ be the shift generated by the two dimensional graph $G$ such that $(HV)_{ij}\neq 0 \Leftrightarrow (VH)_{ij}\neq 0$ holds for all $i,j$ and let $H$ be an irreducible matrix. As $V$ generates a non-empty one dimensional shift, there exists a vertex $i\in\{1,2,\ldots,n\}$ and $m\in\mathbb{N}$ such that $V^m_{ii}\neq 0$. Also, as $H$ is irreducible, there exists a finite path $W=a_0a_1\ldots a_r$ such that $a_0=a_r=i$ and each symbol $j$ appears atleast once in the path $W$. Consequently, for the pattern $P$ of the form

$${\begin{array}{cccccccccccccc}	
		&&&&& i & a_{1} & a_{2} & \ldots & a_{r-1} &  i\\
		&  &  &      & & \vdots &&&&& \\
		
		i & a_{1} & a_{2} & \ldots & a_{r-1} &  i &&
	\end{array} } $$\\		
is allowed for the shift space $X_G$. Further, note that diagonal repetition ($R$ times, where $R$ is more than the number of symbols) is allowed for the shift space and can be extended to a configuration in $X_G$. Also, as $\sigma^{(s(r+1),sm)}(i)=a_s$, $H^{r+1}V^m$ is irreducible. Consequently, $X_G$ is doubly $(r+1,m)$-transitive and the proof is complete.		
\end{proof}

\begin{Remark} \label{rem8}
The above proof establishes that if $(HV)_{ij}\neq 0 \Leftrightarrow (VH)_{ij}\neq 0$ holds for all $i,j$, then irreducibility of any of the generating matrices ensures directional transitivity (in some direction $(r,s)$) and thus transitivity cannot happen in the shift space independently (without directional transitivity). However, in general, the transitivity of the shift space $X_G$ need not imply directional transitivity and the notions are indeed distinct. Also, as weakly mixing ensures interaction between any two pairs of symbols, if any of the shift spaces generated by $H$ or $V$ is weakly mixing, $(HV)_{ij}\neq 0 \Leftrightarrow (VH)_{ij}\neq 0$ for all $i,j$ ensures weak mixing in some direction $(r,s)$ for the two dimensional shift space. It may be noted that as a shift is $(r,s)$-weak mixing if and only if $H^rV^s$ is primitive (for $rs>0$), $(r,s)$-weak mixing of a system is equivalent to $(1,1)$-weak mixing.  It is worth mentioning that directional mixing of a system indeed depends on the direction and existence of mixing notions in one direction for a shift space $X_G$ need not guarantee existence of mixing notion in the other. Consequently, total transitivity and weak mixing are indeed distinct notions for higher dimensional shifts of finite type (recall that the two notions coincide for shifts of finite type in one dimensional case).  We now establish our claims below.
\end{Remark}

\begin{Cor}
	Let $X_{G}$ be a shift space such that $(HV)_{ij}\neq 0 \Leftrightarrow (VH)_{ij}\neq 0$ for all $i,j$. If $H$ or $V$ is primitive, then $X_{G}$ is $(r,s)$-weak mixing (for some direction  $(r,s)\in\mathbb{N}\times\mathbb{N}$). Further, $X_{G_{}} $ is $(r,s)$-weak mixing (for $rs > 0$) if and only if $X_{G} $ is $(1,1)$-weak mixing.
\end{Cor}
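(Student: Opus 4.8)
The plan is to reduce everything to the matrix criterion already in hand — under the standing hypothesis $(HV)_{ij}\neq 0\Leftrightarrow(VH)_{ij}\neq 0$, the shift $X_G$ is weak mixing in a direction $(r,s)$ with $rs>0$ precisely when $H^rV^s$ is primitive (the Corollary following Remark~\ref{rem5}; cf.\ Remark~\ref{rem8}) — plus one elementary lemma. The lemma I would isolate first is a \emph{support-swap lemma}: under the standing hypothesis, for any finite word $w$ in the alphabet $\{H,V\}$ having $a$ letters $H$ and $b$ letters $V$, the zero--one pattern $\operatorname{supp}(w)$ equals $\operatorname{supp}(H^aV^b)$. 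This follows by a bubble-sort argument: writing a product as $A\,(HV)\,B$ with $A,B$ themselves nonnegative products of $H$'s and $V$'s, one has $(A\,HV\,B)_{ij}\neq 0$ iff there exist $k,l$ with $A_{ik}\neq 0$, $(HV)_{kl}\neq 0$, $B_{lj}\neq 0$, and since $(HV)_{kl}\neq 0\Leftrightarrow(VH)_{kl}\neq 0$, transposing two adjacent letters never changes the support of the whole product. Two consequences will be used: $\operatorname{supp}\big((HV)^k\big)=\operatorname{supp}\big(H^kV^k\big)$, and, because a nonnegative integer matrix is primitive exactly when some power is entrywise positive (a property of the support pattern alone), primitivity of $H^aV^b$ depends only on $\operatorname{supp}(H^aV^b)$.

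For the first assertion, assume $H$ is primitive (the case of $V$ is symmetric). Then $H^N$ is entrywise positive for some $N\ge 1$, and $V$ has at least one non-zero entry since $X_G$ is non-empty. By the support-swap lemma $\operatorname{supp}(H^{2N}V)=\operatorname{supp}(H^NVH^N)$, and $(H^NVH^N)_{ij}=\sum_{p,q}(H^N)_{ip}V_{pq}(H^N)_{qj}>0$ for all $i,j$, because every $(H^N)_{ip},(H^N)_{qj}$ is positive while $V\neq 0$. Hence $H^{2N}V$ is entrywise positive, so in particular primitive, and by the quoted criterion $X_G$ is $(2N,1)$-weak mixing with $(2N,1)\in\mathbb N\times\mathbb N$.

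For the equivalence, by the same criterion it suffices to show that $H^rV^s$ is primitive for some $(r,s)$ with $rs>0$ if and only if $HV$ is primitive; the subcase $r,s<0$ is reduced to $r,s>0$ exactly as in Proposition~\ref{5}, by passing to $H^{T},V^{T}$ and using that $A$ is primitive iff $A^{T}$ is, so I may assume $r,s\ge 1$. The engine is that the set $S=\{(a,b)\in\mathbb N\times\mathbb N:\ H^aV^b\ \text{is entrywise positive}\}$ is upward closed: if $(a,b)\in S$ and $a'\ge a,\ b'\ge b$, then $H^{a'}V^{b'}=H^{a'-a}(H^aV^b)V^{b'-b}$, whose $(i,j)$ entry is $\sum_{p,q}(H^{a'-a})_{ip}(H^aV^b)_{pq}(V^{b'-b})_{qj}$; since $H^aV^b$ is entrywise positive, this is positive iff row $i$ of $H^{a'-a}$ and column $j$ of $V^{b'-b}$ are nonzero, which holds because the primitivity hypothesis on whichever side of the equivalence one argues forces (through irreducibility of $HV$, resp.\ of $H^rV^s$) that $H$ has no zero row and $V$ no zero column, and powers inherit this. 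Now, if $HV$ is primitive then $(HV)^k$, hence $H^kV^k$, is entrywise positive for some $k$, so $(k,k)\in S$ and thus $(rk,sk)\in S$, i.e.\ $(H^rV^s)^k$ is entrywise positive and $H^rV^s$ is primitive. Conversely, if $H^rV^s$ is primitive then $(H^rV^s)^k$, hence $H^{rk}V^{sk}$, is entrywise positive for some $k$, so $(rk,sk)\in S$; with $\ell=\max(rk,sk)$, upward closure gives $(\ell,\ell)\in S$, hence $(HV)^\ell$ is entrywise positive and $HV$ is primitive.

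The point I expect to cost the most care is the interface between the support-swap lemma and the "no zero row / no zero column" input used for upward closure of $S$: one must check these are not extra hypotheses but are automatically provided by irreducibility of the relevant matrix ($HV$ in one direction, $H^rV^s$ in the other), and in the first assertion by primitivity of $H$ together with non-emptiness of $X_G$. Once that is settled, everything else is routine bookkeeping with Boolean matrix products.
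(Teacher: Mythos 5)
Your proposal is correct, and it is worth recording how it relates to what the paper actually does. The paper's official proof of this corollary is a pointer to Remark~\ref{rem8}, which in turn only \emph{asserts} the two key facts: that primitivity of one generating matrix yields directional weak mixing in some $(r,s)$, and that ``$(r,s)$-weak mixing is equivalent to $(1,1)$-weak mixing'' because both are governed by primitivity of $H^rV^s$, resp.\ $HV$. The nontrivial step --- why $H^rV^s$ is primitive for one pair $(r,s)$ with $rs>0$ exactly when $HV$ is --- is never argued in the paper, and it is false without the hypothesis $(HV)_{ij}\neq 0\Leftrightarrow (VH)_{ij}\neq 0$, so something must be said. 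You say it: your support-swap lemma (supports of nonnegative products depend only on supports of the factors, so the hypothesis lets you commute $H$ past $V$ freely inside any word) gives $\operatorname{supp}\big((HV)^k\big)=\operatorname{supp}\big(H^kV^k\big)$ and $\operatorname{supp}\big((H^rV^s)^k\big)=\operatorname{supp}\big(H^{rk}V^{sk}\big)$, and the upward-closure of $\{(a,b): H^aV^b>0\}$ (using that irreducibility of the relevant product forces $H$ to have no zero row and $V$ no zero column, properties inherited by powers) closes the loop in both directions. Your treatment of the first assertion is likewise a clean matrix argument ($H^N>0$ and $V\neq 0$ give $H^NVH^N>0$, hence $H^{2N}V>0$ by support-swap) where the paper offers only the heuristic ``weak mixing ensures interaction between any two pairs of symbols.'' So the high-level strategy --- reduce to the primitivity criterion of the corollary following Remark~\ref{rem5} --- is the same, but you supply the combinatorial content that the paper omits; the only points to watch are the implicit non-emptiness of $X_G$ (needed for $V\neq 0$) and the transpose reduction for $r,s<0$, both of which you flag.
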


\begin{proof}
The proof follows from the discussions in Remark \ref{rem8}.
\end{proof}

\begin{ex}
	Let $X_G$ be a shift space arising from following adjacency matrices.
	
	$$\textit{H = V}= \bordermatrix{ & 0 & 1 & 2 & 3  \cr
		0 & 0 & 1 & 0 & 0   \cr
		1 & 0 & 1 & 1 & 0   \cr
		2 & 1 & 0 & 0 & 1   \cr
		3 & 1 & 0 & 0 & 1   \cr	
	}
	$$
	It may be noted that as $H$ is primitive ($H^4_{ij}>0 \  \forall \  i,j$), $X_G$ is $(1,1)$-weak mixing. However, $HV^T=V^TH$ fails to hold. Further, as
	
	$$\textit{$HV^{T}$}= \bordermatrix{ & 0 & 1 & 2 & 3  \cr
		0 & 1 & 1 & 0 & 0   \cr
		1 & 1 & 2 & 0 & 0   \cr
		2 & 0 & 0 & 2 & 2   \cr
		3 & 0 & 0 & 2 & 2   \cr	
	}
	\ \ \ \ \ \
	\textit{$V^{T}H$}= \bordermatrix{ & 0 & 1 & 2 & 3  \cr
		0 & 2 & 0 & 0 & 2   \cr
		1 & 0 & 2 & 1 & 0   \cr
		2 & 0 & 1 & 1 & 0   \cr
		3 & 2 & 0 & 0 & 2   \cr	
	}
	$$\\
fixing any symbol at $(0,0)$ enforces the same symbol along the line $y=-x$ and hence $X_G$ does not exhibit (1,-1)-weak mixing. Consequently, any of the mixing notions in one direction need not ensure the mixing notions in the other. Also, as the example exhibits weak mixing in $(1,1)$-direction, the case provides an example of weak mixing shift which fails to be totally transitive. Consequently, the two notions are distinct for higher dimensional shifts of finite type and an analogous extension of the result valid for one dimensional case does not hold.
\end{ex}


Let $P_k$ and $Q_k$ denote the collection of valid $1\times k$ and $k\times 1$ patterns respectively and let $V_k (H_k)$ denote the matrix characterizing vertical (horizontal) compatibility of elements of $P_k$ ($Q_k$). It may be noted that $H_1=H$ and $V_1=V$. In general, $H_k$ and $V_k$ generate valid patterns of size $k\times r$ and $r\times k$ respectively (for any $r\in \mathbb{N}$). Such matrices have been defined in \cite{markley1} and have been used to investigate the dynamical properties of ``shifts of finite type"(as defined in \cite{markley,markley1}). We now investigate notions of horizontal (vertical) mixing for the shift space $X_G$ in terms of the matrices $H_k$ and $V_k$.

\begin{Proposition}
	Let $X_G$ be a two dimensional shift space generated by a graph $G$ and let $(HV)_{ij}\neq 0 \Leftrightarrow (VH)_{ij}\neq 0$ holds for all $i,j$. Then, $X_G$ is horizontally doubly transitive $\Leftrightarrow~~$ $H_k$ is irreducible $\forall k\in \mathbb{N}$.
\end{Proposition}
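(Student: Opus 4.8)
The plan is to reduce horizontal double transitivity of the two-dimensional shift $X_G$ to the double transitivity (i.e., irreducibility) of each one-dimensional shift whose alphabet is $P_k$, the set of valid $1\times k$ patterns, and whose transition matrix is $H_k$. The key observation is that a horizontal strip of height $k$ in a configuration of $X_G$ is exactly a bi-infinite walk in the graph with vertex set $P_k$ and edge relation given by $H_k$; the condition $(HV)_{ij}\neq 0 \Leftrightarrow (VH)_{ij}\neq 0$ (for all $i,j$) is what guarantees that such a strip can always be completed vertically to a full configuration of $X_G$, so strips and configurations carry the same transitivity information.

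For the forward direction, suppose $X_G$ is horizontally doubly transitive and fix $k\in\mathbb{N}$. Given two valid $1\times k$ patterns $p,q\in P_k$ (equivalently, two height-$k$ column blocks), I would embed each in some configuration of $X_G$ — possible since $(HV)_{ij}\neq 0 \Leftrightarrow (VH)_{ij}\neq 0$ ensures any valid block extends to a configuration — and then use horizontal transitivity (in the positive direction) to find a configuration containing $p$ at some column and $q$ to its right; reading off the intervening height-$k$ columns gives a path in $H_k$ from $p$ to $q$, so $(H_k^m)_{pq}\neq 0$ for some $m$. Applying horizontal transitivity in the negative direction (double transitivity) gives the reverse, so $H_k$ is irreducible. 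For the converse, assume $H_k$ is irreducible for all $k$. Given $\tfrac1n$-neighborhoods $U,V$ of configurations $x,y\in X_G$, take $k=2n+1$ and let $p,q\in P_{2n+1}$ be the central height-$(2n+1)$ column blocks of $x$ and $y$; irreducibility of $H_{2n+1}$ yields a walk in $H_{2n+1}$ from $p$ to $q$ of some length $m>0$, which is precisely a valid $(2n+1)\times(m+2n+1)$ rectangular block having the relevant central data of $x$ on the left and of $y$ on the right. Using $(HV)_{ij}\neq 0 \Leftrightarrow (VH)_{ij}\neq 0$ once more, extend this rectangle to a configuration $z\in X_G$; then $z\in U$ and $\sigma^{(r,0)}(z)\in V$ for an appropriate horizontal shift $r$, and the analogous argument with $q$ to the left of $p$ handles the negative direction, establishing horizontal double transitivity.

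The main obstacle I anticipate is making precise the equivalence "height-$k$ horizontal strips of $X_G$ $\leftrightarrow$ bi-infinite $H_k$-walks," and in particular verifying that the block produced by an $H_k$-walk is genuinely a valid pattern of $X_G$ (not merely horizontally consistent) and that it extends to a full configuration. This is exactly where the hypothesis $(HV)_{ij}\neq 0 \Leftrightarrow (VH)_{ij}\neq 0$ does its work, as already exploited in Proposition \ref{prop2} and Proposition \ref{5}: it is the mechanism that lets one pass from a valid rectangular block to a configuration, and also (applied to subpatterns) that ensures the matrix $H_k$ genuinely records realizable strips rather than spurious ones. I would phrase this as a short lemma-style remark at the start of the proof and then let the two directions go through as sketched, with the central-block size $2n+1$ chosen to match the $\tfrac1n$-neighborhood as in the earlier proofs.
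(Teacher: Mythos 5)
Your proposal is correct and follows essentially the same route as the paper: horizontal double transitivity is translated into connectivity of height-$k$ columns under $H_k$, with the condition $(HV)_{ij}\neq 0 \Leftrightarrow (VH)_{ij}\neq 0$ used exactly where you place it, namely to extend the concatenated rectangular block (and any single column) to a full configuration. The only quibble is notational: in the paper's conventions the height-$k$ columns form $Q_k$ (not $P_k$), and the walk should be anchored at the rightmost column of $x$'s central block and the leftmost column of $y$'s, as your width count already suggests.
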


\begin{proof}
	Let $X_G$ be a two dimensional horizontally doubly transitive shift space. Let $x,y\in X_G$, $x_k= {\begin{array}{cccccccc}
			
			x_{-k,k}  & \hdots &  x_{k,k}  \\
			\vdots  & \vdots & \vdots     \\
			x_{-k,-k}  & \hdots & x_{k,-k}      \\
			
	\end{array} } $ and $y_k= {\begin{array}{cccccccc}
			
			y_{-k,k}  & \hdots &  y_{k,k}  \\
			\vdots & \vdots & \vdots     \\
			y_{-k,-k}  & \hdots & y_{k,-k}      \\
			
	\end{array}}~$, be central blocks and let $U, V$ be $\frac{1}{k}$-neighborhoods of $x, y$ respectively. As $X_G$ is horizontally doubly transitive, there exists $z\in U$, $w\in V$ and $r\in \mathbb{N}$ such that $\sigma^{(r,0)}(z)=w$. It may be noted that as $z\in U$ and $w\in V$, the central blocks of $z$ and $w$ are $x_k$ and $y_k$ respectively. Further, as $\sigma^{(r,0)}(z)=w$,  $a={\begin{array}{cccccccc} x_{k,k}\\ \vdots \\ x_{k,-k}\\ \end{array}}$ and $b={\begin{array}{cccccccc} y_{-k,k}\\ \vdots \\ y_{-k,-k}\\ \end{array}}$ can be connected horizontally (through a finite path). As the arguments hold for any $a,b\in H_k$, $H_k$ is irreducible. Further, as the argument holds for any $k\in \mathbb{N}$, $H_k$ is irreducible for all $k\in\mathbb{N}$ and the proof of forward part is complete.\\
	
	Conversely, let $H_k$ be irreducible for all $k\in\mathbb{N}$. Let $x,y\in X_{G}$ and let $U$ and $V$ be $\frac{1}{k}$-neighborhoods of $x$ and $y$ respectively. For any central blocks $x_k= {\begin{array}{cccccccc}
			
			x_{-k,k}  & \hdots &  x_{k,k}  \\
			\vdots  & \vdots & \vdots     \\
			x_{-k,-k}  & \hdots & x_{k,-k}      \\
			
	\end{array} } $ and $y_k= {\begin{array}{cccccccc}
			
			y_{-k,k}  & \hdots &  y_{k,k}  \\
			\vdots & \vdots & \vdots     \\
			y_{-k,-k}  & \hdots & y_{k,-k}      \\
	\end{array}}~$ of $x$ and $y$ respectively, irreducibility of $H_k$ ensures that $a={\begin{array}{cccccccc} x_{k,k}\\ \vdots \\ x_{k,-k}\\ \end{array}}$ and $b={\begin{array}{cccccccc} y_{-k,k}\\ \vdots \\ y_{-k,-k}\\ \end{array}}$ can be connected horizontally through a finite path (say of length $l$) and hence there exists a valid pattern of the form $${\begin{array}{cccccccc}
			
			x_{-k,k}  & \hdots &  x_{k,k} & \hdots & y_{-k,k}  & \hdots &  y_{k,k} \\
			\vdots  & \vdots & \vdots  & \vdots & \vdots & \vdots & \vdots    \\
			x_{-k,-k}  & \hdots & x_{k,-k}  & \hdots &  y_{-k,-k}  & \hdots & y_{k,-k}   \\		
	\end{array} } $$\\
	As $(HV)_{ij}\neq 0 \Leftrightarrow (VH)_{ij}\neq 0$ holds for all $i,j$, the above pattern can be extended to a valid configuration in $X_G$ (say $z$). Then, $z\in U$, $\sigma^{(2k+1+l,0)}(z)\in V$. As the proof holds for any $x,y\in X_G$, the shift space is horizontally doubly transitive and the proof is complete.
\end{proof}

\begin{Remark}\label{rem9}
The above result establishes that for any shift space $X_G$, $X_G$ is horizontally doubly transitive if and only if each $H_k$ is irreducible. However, transitivity of $X_G$ need not ensure irreducibility for all $H_k$. Once again, as horizontal transitivity can be visualized as a one dimensional shift, horizontal transitivity of the shift space can be characterized via the block representations of the matrices $H_k$. Finally, as similar arguments establish the equivalence of vertical transitivity with irreducibility of $V_k$ (for all $k$), we get the following corollary.
\end{Remark}

\begin{Cor}
	Let $X_{G}$ be two dimensional shift space generated by the graph $G=(\mathcal{H},\mathcal{V})$ such that $(HV)_{ij}\neq 0 \Leftrightarrow (VH)_{ij}\neq 0$ for all $i,j$. Then, $X_G$ is horizontally transitive $\Leftrightarrow$ either all $H_k$ are irreducible or all $H_{k}$ have exactly two irreducible, permutation sub-matrices $A_{1k}, A_{2k}$ such that for any $i \in A_{1k}$, $j \in A_{2k}$, there exists a unique path connecting the two vertices (which traverses through all the vertices of $H_k \setminus (A_{1k} \cup A_{2k})$).
\end{Cor}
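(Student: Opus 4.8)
The plan is to follow the template already established for the one‑dimensional characterization (Proposition~\ref{tr}) and its directional corollary, but now with the role of the adjacency matrix played by the family $\{H_k\}$. Recall that horizontal transitivity of $X_G$ means precisely that the one‑dimensional system obtained by reading a configuration row‑block by row‑block — i.e.\ treating a vertical strip of width $1$ as a single super‑symbol and $H_k$ as the transition matrix on the set $P_k$ of valid $1\times k$ patterns — is transitive (as a one‑dimensional shift generated by the graph with adjacency matrix $H_k$). The condition $(HV)_{ij}\neq 0\Leftrightarrow(VH)_{ij}\neq 0$ for all $i,j$ guarantees, exactly as in the proof of the preceding Proposition, that any allowed rectangular block — in particular any block witnessed by a word in the $H_k$‑graph — extends to a full configuration in $X_G$, so the ``one‑dimensional picture'' is faithful in both directions.

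First I would fix $k\in\mathbb N$ and apply Proposition~\ref{tr} verbatim to the one‑dimensional shift $Y_k$ generated by the graph with adjacency matrix $H_k$: $Y_k$ is transitive if and only if $H_k$ is irreducible, or $H_k$ has exactly two irreducible permutation sub‑matrices $A_{1k},A_{2k}$ with a unique path from each vertex of $A_{1k}$ to each vertex of $A_{2k}$ passing through every vertex of $H_k\setminus(A_{1k}\cup A_{2k})$. So the real content is the equivalence ``$X_G$ horizontally transitive $\Leftrightarrow$ $Y_k$ transitive for every $k$''. For the forward direction: given non‑empty open $U,V$ in $Y_k$, lift them to $\tfrac1k$‑neighborhoods in $X_G$ (a word in $P_k^{\,n}$ over the $H_k$‑graph corresponds to a central vertical strip of height $k$ and some width $n$; thicken it vertically to a square block using the commutation‑type hypothesis), use horizontal transitivity of $X_G$ to find a translate of one lift meeting the other, and project back down to a $k$‑row window to get the required overlap in $Y_k$ — this is the same ``central block'' argument used throughout the paper, just read in one direction. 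For the converse: given $\tfrac1k$‑neighborhoods $U,V$ of $x,y\in X_G$, look at the central $(2k+1)$‑square blocks $x_k,y_k$, regard the width‑$(2k+1)$ vertical strips of $x_k$ and $y_k$ as words over $P_{2k+1}$ (or, cleanly, over $P_k$ after restricting to the relevant $k$ rows), use transitivity of $Y_k$ to connect the right strip of $x_k$ to the left strip of $y_k$ by a finite $H_k$‑path, assemble the resulting horizontal pattern, and invoke $(HV)_{ij}\neq 0\Leftrightarrow(VH)_{ij}\neq 0$ to extend it to a configuration $z\in X_G$ with $z\in U$ and $\sigma^{(m,0)}(z)\in V$ for the appropriate horizontal shift $m$. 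Finally I would note that the block‑decomposition condition on $H_k$ is the translation of the Proposition~\ref{tr} dichotomy and that the argument is independent of the direction, so the vertical statement follows by replacing $H_k$ with $V_k$ throughout (this is the content of the remark preceding the corollary).

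The main obstacle — more a bookkeeping subtlety than a genuine difficulty — is the passage between different widths of strips: a horizontal translate in $X_G$ by $m$ does not respect a fixed decomposition of a configuration into width‑$k$ super‑symbols unless $k\mid m$, so one must either work with the one‑dimensional shift on $P_k$ where the shift is by single super‑symbols (and check that horizontal transitivity of $X_G$, which allows arbitrary integer horizontal translates, still yields transitivity of that coarser one‑dimensional system — it does, because any sufficiently large horizontal translate can be padded on the right by an arbitrary allowed strip of controlled width to become a multiple of $k$, again using the extension hypothesis), or phrase everything in terms of ``there exist configurations agreeing with the prescribed patterns in shifted positions'' and avoid committing to a super‑symbol grid at all. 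Either route works; I would take the second, since it keeps the proof parallel to the already‑written proof of the horizontal doubly‑transitive Proposition. With that understood, the corollary is immediate: apply Proposition~\ref{tr} to each $H_k$ and combine over all $k$.

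\begin{proof}
For each $k\in\mathbb N$, horizontal transitivity of $X_G$ is, in view of the hypothesis $(HV)_{ij}\neq 0\Leftrightarrow(VH)_{ij}\neq 0$ (which by the argument in the preceding Proposition lets every allowed rectangular block be completed to a configuration of $X_G$), equivalent to transitivity of the one‑dimensional shift $Y_k$ generated by the graph whose adjacency matrix is $H_k$: the forward implication lifts open sets of $Y_k$ to $\tfrac1k$‑neighborhoods in $X_G$, applies horizontal transitivity, and projects the overlap onto a $k$‑row window; the converse takes the central $(2k+1)$‑square blocks $x_k,y_k$ of $x,y\in X_G$, uses transitivity of $Y_k$ to join the right vertical strip of $x_k$ to the left vertical strip of $y_k$ by a finite horizontal path in the $H_k$‑graph, and extends the resulting pattern to some $z\in X_G$ with $z\in U$ and $\sigma^{(m,0)}(z)\in V$ for a suitable $m\in\mathbb N$. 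Since $Y_k$ is a one‑dimensional shift generated by the graph with adjacency matrix $H_k$, Proposition~\ref{tr} shows $Y_k$ is transitive if and only if $H_k$ is irreducible, or $H_k$ has exactly two irreducible permutation sub‑matrices $A_{1k},A_{2k}$ such that for every $i\in A_{1k}$, $j\in A_{2k}$ there is a unique path from $i$ to $j$ traversing all vertices of $H_k\setminus(A_{1k}\cup A_{2k})$. As $X_G$ is horizontally transitive precisely when $Y_k$ is transitive for every $k\in\mathbb N$, the stated equivalence follows, and replacing $H_k$ by $V_k$ throughout gives the corresponding characterization of vertical transitivity.
\end{proof}
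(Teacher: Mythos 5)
Your proposal is correct and follows essentially the same route as the paper: the paper's proof of this corollary is a one-line appeal to Remark~\ref{rem9}, which says precisely that horizontal transitivity can be viewed as transitivity of the one-dimensional shift generated by $H_k$ (for each $k$) and then characterized via Proposition~\ref{tr}, and you carry out exactly that reduction, in rather more detail than the paper does (including the role of the hypothesis $(HV)_{ij}\neq 0 \Leftrightarrow (VH)_{ij}\neq 0$ in extending blocks to configurations). The only caveat is that what your argument literally yields is ``for each $k$, $H_k$ is irreducible or has the stated block structure,'' which is formally weaker than the corollary's ``all $H_k$ irreducible, or all $H_k$ have the structure'' --- but this quantifier slippage is present in the paper's own statement and remark as well.
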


\begin{proof}
	The proof follows from discussions in Remark \ref{rem9}.
\end{proof}

%
%

\begin{Proposition}
	Let $X_G$ be a two dimensional shift generated by the graph $G$ and let $(HV)_{ij}\neq 0 \Leftrightarrow (VH)_{ij}\neq 0$ holds for all $i,j$. If $X_H$ is weakly mixing then $X_G$ is vertically doubly transitive $\Leftrightarrow$ $X_G$ is vertically weakly mixing.
\end{Proposition}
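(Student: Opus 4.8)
The plan is to prove the nontrivial implication, namely that vertical doubly transitivity of $X_G$ together with weak mixing of $X_H$ forces vertical weak mixing of $X_G$; the reverse implication is immediate since weak mixing always implies transitivity. First I would set up the standard neighborhood reformulation: given two pairs of $\frac{1}{n}$-neighborhoods $U_1,U_2$ and $V_1,V_2$ of configurations in $X_G$, I need a single vertical power $\sigma^{(0,s)}$ (with $s\in\mathbb{N}$) that simultaneously sends a point of $U_i$ into $V_i$ for $i=1,2$. Passing to central blocks, this amounts to finding a common $s$ so that the block $x_n^{(i)}$ can be connected vertically (below) to the block $y_n^{(i)}$ after exactly $s$ rows, for $i=1,2$ at once, and then invoking the hypothesis $(HV)_{ij}\neq 0 \Leftrightarrow (VH)_{ij}\neq 0$ to extend the resulting finite pattern to a full configuration of $X_G$ (exactly as in the proofs of Propositions \ref{prop2} and \ref{5}).

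The key step is the synchronization of the vertical connection time across the two pairs. By vertical doubly transitivity, $V_k$ is irreducible for every $k$ (using the previous proposition), so for each $i$ there is some $s_i$ with a vertical path of length $s_i$ from (the bottom row of) $x_n^{(i)}$ to (the top row of) $y_n^{(i)}$. The obstacle is that $s_1$ and $s_2$ need not be equal. This is where weak mixing of $X_H$ enters: I would use it to build, for each $i$, a horizontally extended version of the relevant blocks and — more to the point — to produce \emph{loops}. Concretely, weak mixing of $X_H$ implies $H$ is primitive, hence $H^N$ is strictly positive for all large $N$; the trick is to show that the vertical transition structure admits a ``filler'' of arbitrary length. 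I would argue that since $V_k$ is irreducible, there is some state/block $w$ and some period $p$ with $(V_k^p)_{ww}\neq 0$; inserting the block $w$ into the path and padding with this loop lets me inflate $s_i$ by multiples of $p$. Combined with the fact that weak mixing of $X_H$ guarantees one has enough freedom to realize these loops as genuine two-dimensional strips (so that the horizontal compatibility never obstructs the padding), I can arrange $s_1 + a_1 p_1 = s_2 + a_2 p_2$ for suitable $a_i$, or more cleanly, reduce to a situation where the set of achievable connection lengths for each $i$ is eventually all of $\mathbb{N}$ (cofinite), whence a common $s$ exists.

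The cleanest route, which I would actually pursue, is: vertical doubly transitivity gives irreducibility of each $V_k$; weak mixing of $X_H$ gives that the full two-dimensional shift has, in the vertical direction, a strip subsystem on which $V$ restricted to relevant blocks is not merely irreducible but \emph{primitive} — because weak mixing of $X_H$ forces aperiodicity obstructions to vanish when we are allowed to widen blocks horizontally (any vertical period in a narrow strip can be broken by embedding it in a wider, horizontally weak-mixing strip). Making this precise is the heart of the argument: I would show that for blocks of width $k$, irreducibility of $V_k$ plus primitivity of $H$ implies $V_{k'}$ is primitive for some $k'\ge k$, so that vertical connection lengths become cofinite and synchronization is automatic. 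Then the extension lemma ($(HV)_{ij}\neq 0 \Leftrightarrow (VH)_{ij}\neq 0$) completes the two pairs simultaneously, giving vertical weak mixing.

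The main obstacle I anticipate is precisely justifying that horizontal weak mixing ``kills'' vertical periodicity of the strip matrices $V_k$: one must rule out a scenario where every vertical walk between two fixed wide blocks has length in a fixed residue class mod $d>1$ regardless of how wide the strip is. I expect this to follow from the fact that weak mixing (equivalently total transitivity / primitivity) of $X_H$ produces, for each symbol, horizontal loops of two coprime lengths, and these can be ``rotated'' into the vertical direction using the commutation-type hypothesis $(HV)_{ij}\neq 0 \Leftrightarrow (VH)_{ij}\neq 0$ to shift a vertical step past a horizontal loop; but pinning down this interchange carefully — especially ensuring the glued pattern remains globally extendable — is the delicate part of the proof.
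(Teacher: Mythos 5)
You have correctly isolated the real difficulty — producing a \emph{single} vertical displacement that works for both pairs simultaneously — but your proposed resolution of it is left unproved, and it is exactly the step you flag as ``the heart of the argument.'' Your plan rests on the claim that weak mixing of $X_H$ upgrades the irreducible strip matrices $V_k$ to primitive matrices $V_{k'}$ for some wider $k'$, so that the set of achievable vertical connection lengths becomes cofinite and synchronization is automatic. You give no proof of this, only a heuristic about rotating horizontal loops of coprime lengths into the vertical direction via the hypothesis $(HV)_{ij}\neq 0 \Leftrightarrow (VH)_{ij}\neq 0$; that hypothesis only controls extendability of $L$-shaped corners into rectangles and does not obviously let you exchange a vertical step with a horizontal loop. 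Moreover, the claim you are trying to establish is strictly stronger than what the proposition needs (primitivity of the $V_{k'}$ would give vertical \emph{mixing}), which is a warning sign that this route is doing more work than necessary. As written, the argument has a genuine gap at its central step.

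The paper avoids the synchronization problem entirely by a different device: instead of connecting $x$ to $z$ and $y$ to $w$ by two separate vertical paths and then trying to equalize their lengths, it first glues the two source blocks into one wide horizontal strip and the two target blocks into another. Weak mixing of $X_H$ is used precisely here — it supplies horizontal connecting paths of a \emph{common} length $l$ from $x_{k,k}$ to $y_{-k,k}$ and from $z_{k,-k}$ to $w_{-k,-k}$, so the two wide strips have their sub-blocks at matched horizontal offsets. Then a single application of irreducibility of the vertical transition matrix for strips of this larger width (which vertical doubly transitivity gives you for every width) connects the bottom wide strip to the top one by one vertical path of some length $p$, and the single shift $\sigma^{(0,2k+1+p)}$ carries a point of $U$ into $Z$ and simultaneously a point of $V$ into $W$, because both pairs live inside the same extended configuration. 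In other words, the paper pushes the ``simultaneity'' burden onto the horizontal direction, where weak mixing of $X_H$ discharges it for free, and only ever needs irreducibility — never primitivity — in the vertical direction. If you replace your primitivity claim with this merging step, the rest of your outline (central blocks, extension via $(HV)_{ij}\neq 0 \Leftrightarrow (VH)_{ij}\neq 0$, and the trivial converse) goes through as you describe.
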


\begin{proof}
	
Let $X_G$ be vertically doubly transitive shift space and let $(HV)_{ij}\neq 0 \Leftrightarrow (VH)_{ij}\neq 0$ holds for all $i,j$. Let $x,y,z,w\in X_G$ and \\ $x_k= {\begin{array}{cccccccc}
			
			x_{-k,k}  & \hdots &  x_{k,k}  \\
			\vdots  & \vdots & \vdots     \\
			x_{-k,-k}  & \hdots & x_{k,-k}      \\
			
	\end{array} } $, $y_k= {\begin{array}{cccccccc}
			
			y_{-k,k}  & \hdots &  y_{k,k}  \\
			\vdots & \vdots & \vdots     \\
			y_{-k,-k}  & \hdots & y_{k,-k}      \\
	\end{array}}~$, $z_k= {\begin{array}{cccccccc}
			
			z_{-k,k}  & \hdots &  z_{k,k}  \\
			\vdots  & \vdots & \vdots     \\
			z_{-k,-k}  & \hdots & z_{k,-k}      \\
			
	\end{array} } $ and $w_k= {\begin{array}{cccccccc}
			
			w_{-k,k}  & \hdots &  w_{k,k}  \\
			\vdots & \vdots & \vdots     \\
			w_{-k,-k}  & \hdots & w_{k,-k}      \\
			
	\end{array} } $ be central blocks of size $k$ (squares of size $2k+1$) of $x,y,z$ and $w$ respectively. Let $U,V,Z,W$ be $\frac{1}{k}$-neighborhoods of $x,y,z$ and $w$ respectively. As $X_H$ is weakly mixing, the symbols $x_{k,k}$ and $z_{k,-k}$ can be connected to $y_{-k,k}$ and $w_{-k,-k}$ respectively through  horizontal paths of same length (say $l$). Also, as $V_k$'s are irreducible for all $k$, the strips $x_{-k,k}\ldots x_{k,k}\ldots y_{-k,k}\ldots y_{k,k}$ and $z_{-k,-k}\ldots z_{k,-k}\ldots w_{-k,-k}\ldots w_{k,-k}$ can be connected vertically (say through a path of length $p$).  Consequently, there exists patterns of the form
	
	$${\begin{array}{cccccccccccccc}	
			z_{-k,k} & \hdots  &  z_{k,k} &\hskip 1.5cm&  w_{-k,k} & \hdots  & w_{k,k} \\
			\vdots & \vdots  & \vdots   && \vdots & \vdots  & \vdots  \\
			z_{-k,-k}  & \hdots & z_{k,-k}  && w_{-k,-k}  & \hdots & w_{k,-k}\\
			&&&&&&&&&& \\
			\vdots&\vdots& \vdots&&\vdots&\vdots&\vdots&&&& \\
			\vdots&\vdots& \vdots&&\vdots&\vdots&\vdots&&&& \\
			&&&&&&&&&& \\
			x_{-k,k} & \hdots  &  x_{k,k} &&  y_{-k,k} & \hdots  & y_{k,k} \\
			\vdots & \vdots  & \vdots   &&  \vdots & \vdots  & \vdots  \\
			x_{-k,-k}  & \hdots & x_{k,-k}  &&   y_{-k,-k}  & \hdots & y_{k,-k}\\
	\end{array} } $$\\
	
	which are valid for the shift space $X_G$ and hence can be extended to  configurations in $X_G$ (say $r$ and $s$ respectively). Finally, note that while $r\in U, s\in V$ (as central blocks of $r$ and $s$ are $x_k$ and $y_k$ respectively), we also have $\sigma^{(0,2k+1+p)}(r)\in Z$ and  $\sigma^{(0,2k+1+p)}(s)\in W$ (as their central blocks are $z_k$ and $w_k$ respectively). Consequently, $X_G$ exhibits weak mixing and the proof of forward part is complete.
	
	Conversely, if $X_G$ exhibits vertical weak mixing then it exhibits vertical doubly transitivity and the proof is complete.
\end{proof}

\begin{Remark} \label{rem10}
The above results establish the equivalence of vertical doubly transitivity and vertical weak mixing under the condition $(HV)_{ij}\neq 0 \Leftrightarrow (VH)_{ij}\neq 0$ for all $i,j$ when $H$ exhibits weak mixing. For stronger notions of mixing in any direction $(r,s)$, it may be noted that as shift in a particular direction is a one dimensional map and stronger notions of mixing (such as total transitivity, weak mixing and strong mixing) are equivalent for one dimensional shifts, similar arguments establish analogous results for various notions of directional mixing.  It may be noted that while any of the mixing notions for one of the generating matrices ensures the same for $X_G$ under $(HV)_{ij}\neq 0 \Leftrightarrow (VH)_{ij}\neq 0$ for all $i,j$, any of the mixing notions for $X_G$ need not ensure the same for any of the generating matrices under the conditions imposed. We now give an example in support of our claim.
\end{Remark}

\begin{ex}
	Let $X_G$ be a shift space arising from following adjacency matrices:
	$$\textit{H}= \bordermatrix{ & 0 & 1 & 2 & 3  \cr
		0 & 1 & 1 & 1 & 1   \cr
		1 & 1 & 1 & 1 & 1   \cr
		2 & 0 & 0 & 1 & 1  \cr
		3 & 0 & 0 & 1 & 1   \cr	
	}
	\ \ \ \ \ \
	\textit{V}= \bordermatrix{ & 0 & 1 & 2 & 3  \cr
		0 & 1 & 1 & 0 & 0   \cr
		1 & 1 & 1 & 0 & 0   \cr
		2 & 1 & 1 & 1 & 1   \cr
		3 & 1 & 1 & 1 & 1   \cr	
	}
	$$
	
It may be noted that $H$ and $V$ are not irreducible. However, as $(HV)_{ij} \neq 0 \Leftrightarrow (VH)_{ij} \neq 0 \ \forall \ i,j$ holds and $HV$ is primitive, the shift space $X_G$ exhibits $(1,1)$-weak mixing. Consequently, weak mixing (directional weak mixing) for $X_G$ need not ensure weak mixing (any form of mixing) for any of the generating matrices.
\end{ex}	
 We now turn our attention to the graph shifts arising from graph products between two-dimensional graphs $G_i= (H_i, V_i)$. It may be noted that if a two-dimensional graph $G$ can be expressed as a graph product of some two-dimensional graphs, then dynamics of shift space $X_G$ can be investigated through graphs of reduced complexity and hence provides a more efficient way of investigating the dynamics of the shift space under discussion. Before we move further, we first provide an example in support of our claim.
 
 \begin{ex}
 	Let $G$ be a two-dimensional graph generated by matrices 
 	
 	$$\textit{H}= \bordermatrix{ & a & b & c & d & e & f  \cr
 		a & 1 & 1 & 1 & 1 & 0 & 0 \cr
 		b & 1 & 0 & 1  & 0 & 0 & 0 \cr
 		c & 0 & 0 & 0  & 0 & 1 & 1 \cr   
 		d & 0 & 0 & 0  & 0  & 1 & 0\cr  
 		e & 1 & 1 & 0  & 0  & 0 & 0\cr 
 		f & 1 & 0 & 0  & 0  & 0 & 0\cr 
 	}
 	\ \ \ \ \ \
 	\textit{V}= \bordermatrix{ & a & b & c & d & e & f  \cr
 		a & 0 & 0 & 1 & 1 & 1 & 1 \cr
 		b & 0& 0 & 1  & 0 & 1 & 0 \cr
 		c & 1 & 1 & 0  & 0 & 0 & 0 \cr   
 		d & 1 & 0 & 0  & 0  & 0 & 0\cr  
 		e & 1 & 1 & 0  & 0  & 0 & 0\cr 
 		f & 1 & 0 & 0  & 0  & 0 & 0\cr 
 	}
 	$$
 	Next, consider matrices 
 	$$\textit{$H_1$}= \bordermatrix{ & 0 & 1 & 2  \cr
 		0 & 1 & 1 & 0   \cr
 		1 & 0 & 0 &  1  \cr	
 		2 & 1 & 0 &  0  \cr
 	},
 	\ \ \ \ 
 	\textit{$ V_1 $}= \bordermatrix{ & 0 & 1 & 2  \cr
 		0 & 0 & 1 & 1   \cr
 		1 & 1 & 0 & 0   \cr
 		2 & 1 & 0 & 0   \cr
 	}
 	$$ and
 	$$\textit{$H_2$}= \bordermatrix{ & 3 & 4  \cr
 		3 & 1 & 1   \cr
 		4 & 1 & 0  \cr	
 	},
 	\ \ \ \ 
 	\textit{$ V_2 $}= \bordermatrix{ & 3 & 4  \cr
 		3 & 1 & 1   \cr
 		4 & 1 & 0  \cr	
 	}
 	$$
 	It can be observed that $ \exists$ indices $i,j$ such that $(HV)_{ij}=0$ but $(VH)_{ij}\neq 0$ and $ \exists$ indices $k,l$ such that $(VH)_{kl}=0$ but $(HV)_{kl}\neq 0$. However, as $H=H_1 \times H_2$ and $V=V_1 \times V_2$, the two-dimensional graph $G=(H,V)$ can be expressed as the Tensor product $G_1 \times G_2$, where $G_i= (H_i, V_i)$ for $i \in \{1,2\}$. Consequently, $X_G$ is conjugate to $X_{G_1 \times G_2}$ and thus the qualitative nature of the shift space can be investigated through dynamics of ``much simpler spaces" $X_{G_1}$ and $X_{G_2}$. In particular, it can be seen that $X_{G_1}$ and $X_{G_2}$ are non-empty, have horizontal (vertical) periodic points and are $(1,1)$-transitive. Consequently, the shift space $X_G$ is indeed non-empty, has horizontal (vertical) periodic points and is also $(1,1)$-transitive. Consequently, the dynamics of the shift space $X_G$ can be investigated through dynamics of ``much simpler spaces" $X_{G_1}$ and $X_{G_2}$.
 \end{ex}
 \begin{Proposition}\label{1} 
 	For any given graph $G=(V,E)$ with $n$ vertices, $G$ can be expressed as a Tensor product of two graphs if and only if there exists $r_1,r_2\in\mathbb{N}$ such that $n=r_1r_2$, a $r_2\times r_2$ matrix $B$ and a permutation $\{v_{\sigma(1)},v_{\sigma(2)}, \ldots, v_{\sigma(n)} \}$ of the vertex set $V$ such that adjacency matrix $A_G$ (with respect to $\{v_{\sigma(1)},v_{\sigma(2)},\ldots,v_{\sigma(n)} \}$) is of the form
 	
 	$\bordermatrix{ & &  &  & \cr
 		&	S_{11} & S_{12} & \ldots & S_{1 r_2} \cr
 		&   S_{21} & S_{22} & \ldots & S_{2 r_2} \cr
 		&   \vdots & \vdots & \vdots & \vdots \cr   
 		&   S_{r_11} & S_{r_12} & \ldots & S_{r_1 r_2} \cr
 	}
 	\ \ \ \ \ \ $  where $S_{ij}$ are either $B$ or $0$ (the zero matrix)
 \end{Proposition}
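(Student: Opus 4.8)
The plan is to translate the combinatorial definition of the Tensor graph product into the language of Kronecker products of adjacency matrices, after which the block form in the statement essentially reads itself off. The key observation I would record first: if $G_1$ and $G_2$ are directed graphs with adjacency matrices $A_1$ (of order $r_1$) and $A_2$ (of order $r_2$), and we order $\mathcal{V}(G_1)\times\mathcal{V}(G_2)$ lexicographically, then the adjacency matrix of $G_1\times G_2$ is exactly the Kronecker product $A_1\otimes A_2$. Indeed, by definition $(a,b)$ is joined to $(c,d)$ in $G_1\times G_2$ iff $ac\in\mathcal{E}(G_1)$ and $bd\in\mathcal{E}(G_2)$, which says that the $(i,j)$-th $r_2\times r_2$ block of the adjacency matrix equals $(A_1)_{ij}\,A_2$; this block is $A_2$ when $v_iv_j\in\mathcal{E}(G_1)$ and is the zero block otherwise. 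That is precisely the displayed block form with $B=A_2$.

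For the forward implication I would then argue as follows. Suppose $G\cong G_1\times G_2$. Then $n=|\mathcal{V}(G)|=|\mathcal{V}(G_1)|\,|\mathcal{V}(G_2)|=r_1r_2$. Choosing a bijection between $\mathcal{V}(G)$ and $\mathcal{V}(G_1)\times\mathcal{V}(G_2)$ and listing the latter lexicographically induces a permutation $\{v_{\sigma(1)},\dots,v_{\sigma(n)}\}$ of $\mathcal{V}(G)$; with respect to this ordering the adjacency matrix of $G$ is $A_{G_1}\otimes A_{G_2}$, which has the required $r_1\times r_1$ array of $r_2\times r_2$ blocks, each block being $B:=A_{G_2}$ or the zero matrix according to the adjacency of the corresponding pair of vertices in $G_1$.

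For the converse, assume $r_1,r_2$, the order-$r_2$ matrix $B$, and the permutation $\sigma$ are as in the statement. If $B=0$ then $A_G=0$ and $G$ is the (trivial) Tensor product of its vertex set with an edgeless graph, so assume $B\neq 0$. Define the $r_1\times r_1$ matrix $C$ by $C_{ij}=1$ if the $(i,j)$-th block $S_{ij}$ equals $B$ and $C_{ij}=0$ if $S_{ij}=0$; since $B\neq 0$ this is unambiguous, $C$ is a $0/1$ matrix, and by construction the adjacency matrix of $G$ (with respect to $\sigma$) is $C\otimes B$. Because $A_G$ has $0/1$ entries and $C$ has a nonzero entry, $B$ is itself $0/1$-valued, so $C$ and $B$ are adjacency matrices of bona fide graphs $G_1$ (on $r_1$ vertices) and $G_2$ (on $r_2$ vertices). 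By the opening observation, $A_{G_1\times G_2}=C\otimes B$ in lexicographic ordering, hence $G$ is isomorphic to $G_1\times G_2$, which completes the proof.

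I expect no serious obstacle here; the only points that need care are (i) making precise that "block form after some relabeling of the vertices" is exactly a Kronecker factorization $A_G=C\otimes B$, and (ii) the well-definedness of $C$, which forces the mild genericity hypothesis $B\neq 0$ (the otherwise degenerate case being handled separately). Everything else is a direct dictionary between the graph-theoretic definition of $\times$ and the elementary algebra of Kronecker products.
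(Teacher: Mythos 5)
Your proposal is correct and follows essentially the same route as the paper: order the product vertex set lexicographically, observe that the adjacency matrix of $G_1\times G_2$ is then the Kronecker product $A_{G_1}\otimes A_{G_2}$ (giving the block form with $B=A_{G_2}$), and in the converse direction read off $G_1$ from the pattern of nonzero blocks and take $A_{G_2}=B$. Your explicit handling of the degenerate case $B=0$ and of the well-definedness of the block pattern is a small tidiness improvement over the paper's proof but does not change the argument.
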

 
 \begin{proof}
 	First, consider that graph $G$ can be expressed as a Tensor product of two graphs $G_1, G_2$, where $\mathcal{V}(G_1)=\{a_1, a_2 \ldots a_{r_1}\}$ and $\mathcal{V}(G_2)=\{b_1, b_2 \ldots b_{r_2}\}$. Then, if the vertex set for Tensor product is ordered as $(a_1, b_1), (a_1, b_2)  \ldots (a_1, b_{r_2}), (a_2, b_1), (a_2, b_2)\ldots (a_2,$ $   b_{r_2}) \ldots (a_{r_1}, b_1), (a_{r_1}, b_2)\ldots $ $(a_{r_1}, b_{r_2})$, as $(a_i, b_j) (a_k, b_l) \in \mathcal{E}(G_1 \times G_2)$  if and only if  $a_i a_k \in \mathcal{E}(G_1)$ and  $b_j b_l \in \mathcal{E}(G_2)$, the adjacency matrix for the Tensor product is given by 
 	
 	$\bordermatrix{ & &  &  & \cr
 		&	S_{11} & S_{12} & \ldots & S_{1r_2} \cr
 		&   S_{21} & S_{22} & \ldots & S_{2r_2} \cr
 		&   \vdots & \vdots & \vdots & \vdots \cr   
 		&   S_{r_11} & S_{r_12} & \ldots & S_{r_1 r_2} \cr
 	}
 	\ \ \ \ \ \ $ where $S_{ij}=A_{G_2}$ if $a_i$ and $a_j$ are adjacent in $G_1$ (and $0$ otherwise). As $G$ is the Tensor product of $G_1$ and $G_2$,  $A_G= A_{G_1 \times G_2}$ and hence the proof of forward part is complete.\\
 	
 	Conversely, let the adjacency matrix of $G$ be of the form 
 	
 	$\bordermatrix{ & &  &  & \cr
 		&	S_{11} & S_{12} & \ldots & S_{1r_2} \cr
 		&   S_{21} & S_{22} & \ldots & S_{2r_2} \cr
 		&   \vdots & \vdots & \vdots & \vdots \cr   
 		&   S_{r_11} & S_{r_12} & \ldots & S_{r_1 r_2} \cr
 	}
 	\ \ \ \ \ \ $  where $S_{ij}$ are either $B$ or $0$ (the zero matrix). Define $G_1=(V_1,E_1)$, where $V_1=\{a_1,a_2,\ldots a_{r_1}\}$ and $(a_i a_j) \in E_1$ if $B_{ij}=B$ and $G_2=(V_2,E_2)$ where $V_2=\{b_1,b_2,\ldots,b_{r_2}\}$ and $E_2$ be defined such that $A_{G_2}=B$. Then, as $n=r_1r_2$ and $A_G= A_{G_1 \times G_2}$, the converse holds and the proof is complete.
 \end{proof}
 
 \begin{Remark} \label{rem11}
It may be noted that for any graph $G$, $G=G_1\times G_2$ where $G_1=G$ and $G_2=(V_2,E_2)$ with $V_2=\{v_2\}$ is a singleton and $E_2$ consists of a self loop at $v_2$. Consequently, any graph $G$ can be written as a Tensor product of two graphs. However, such a factorization does not reduce the computational complexity of the shift space induced by $G$ and the reduction of the computational complexity indeed depends on the structure of the adjacency matrix of the underlying graph. Further, for given graphs $G_1=(V_1,E_1)$ and $G_2=(V_2,E_2)$, note that $(a_i, b_j) (a_k, b_l) \in \mathcal{E}(G_1  \scalebox{0.7}{$\square$} G_2)$  if and only if  $a_i a_k \in \mathcal{E}(G_1)$ ($a_i =a_k$) and $b_j =b_l$  ($b_j b_l \in \mathcal{E}(G_2)$). Further, if $\mathcal{V}(G_1) = \{a_1, a_2 \ldots a_n\}$, $\mathcal{V}(G_2) = \{b_1, b_2 \ldots b_m\}$ and the adjacency matrix of ($G_1  \scalebox{0.7}{$\square$} G_2)$ is indexed using natural order then for $a_i \neq a_j$, while $a_i a_j \notin \mathcal{E}(G_1)$  forces the corresponding $m\times m$ block to be zero matrix, $a_i a_j \in \mathcal{E}(G_1)$ forces the corresponding  $m\times m$ block to be an $m\times m$ identity matrix (for $a_i \neq a_j$). Finally, while $a_i=a_j$ then, $a_i a_i \notin \mathcal{E}(G_1)$ forces the corresponding $m\times m$ block to be $B$, and $a_i a_i \in \mathcal{E}(G_1)$ forces the corresponding block to be $B_{ii}= B*I$ (where $B$ is a binary matrix and $*$ is a matrix operation such that $(P*Q)_{ij}=max(P_{ij}, Q_{ij})$). Consequently, the adjacency matrix of the Cartesian product of graphs can be characterized using the corresponding block representation and we get the following corollary.
 \end{Remark}
 
 \begin{Cor}
 	For any given graph $G=(V,E)$ with $n$ vertices, $G$ can be expressed as a Cartesian product of two graphs if and only if there exists $r_1,r_2\in\mathbb{N}$ such that $n=r_1r_2$, a $r_2\times r_2$ matrix $B$ and a permutation $\{v_{\sigma(1)},v_{\sigma(2)}, \ldots, v_{\sigma(n)} \}$ of the vertex set $V$ such that adjacency matrix $A_G$ (with respect to $\{v_{\sigma(1)},v_{\sigma(2)},\ldots,v_{\sigma(n)} \}$) is of the form
 	
 	$\bordermatrix{ & &  &  & \cr
 		&	B_{11} & B_{12} & \ldots & B_{1r_2} \cr
 		&   B_{21} & B_{22} & \ldots & B_{2r_2} \cr
 		&   \vdots & \vdots & \vdots & \vdots \cr   
 		&   B_{r_11} & B_{r_12} & \ldots & B_{r_1 r_2} \cr
 	}
 	\ \ \ \ \ \ $  where 			$B_{ij} = \left\{%
 	\begin{array}{ll}
 		0 \text{~~or~~} I, & \hbox{$i\neq j$;} \\
 		B \text{~~or~~} (B*I)  & \hbox{$ i=j $;} \\
 	\end{array}%
 	\right.$
 \end{Cor}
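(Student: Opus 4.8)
The plan is to mirror the proof of Proposition \ref{1} almost verbatim, replacing the Tensor product structure with the Cartesian product structure described in Remark \ref{rem11}. First I would recall the defining adjacency condition for the Cartesian product: $(a_i,b_j)(a_k,b_l)\in\mathcal{E}(G_1\scalebox{0.7}{$\square$}G_2)$ if and only if either ($a_i=a_k$ and $b_jb_l\in\mathcal{E}(G_2)$) or ($a_ia_k\in\mathcal{E}(G_1)$ and $b_j=b_l$). Ordering the vertex set of the product as $(a_1,b_1),\ldots,(a_1,b_m),(a_2,b_1),\ldots,(a_n,b_m)$, the adjacency matrix splits into $r_1\times r_1$ many $r_2\times r_2$ blocks $B_{ij}$, and I would read off each block from the above rule: the $(i,j)$-block records adjacencies between pairs with first coordinates $a_i$ and $a_j$. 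For $i\neq j$, pairs $(a_i,b_p)$ and $(a_j,b_q)$ can only be adjacent when $b_p=b_q$ and $a_ia_j\in\mathcal{E}(G_1)$, so the block is $I$ if $a_ia_j\in\mathcal{E}(G_1)$ and $0$ otherwise. For $i=j$, the adjacency $(a_i,b_p)(a_i,b_q)$ holds when $b_pb_q\in\mathcal{E}(G_2)$, and additionally when $b_p=b_q$ and $a_ia_i\in\mathcal{E}(G_1)$ (a self-loop on $a_i$); hence the diagonal block is $B\coloneqq A_{G_2}$ if there is no self-loop at $a_i$, and $B*I$ if there is one. This establishes the forward direction.

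For the converse I would take an adjacency matrix $A_G$ presented (after a suitable relabelling $v_{\sigma(1)},\ldots,v_{\sigma(n)}$) in the stated block form with $n=r_1r_2$, a fixed $r_2\times r_2$ binary matrix $B$, off-diagonal blocks in $\{0,I\}$ and diagonal blocks in $\{B,B*I\}$. I then define $G_2=(V_2,E_2)$ with $V_2=\{b_1,\ldots,b_{r_2}\}$ and $E_2$ chosen so that $A_{G_2}=B$ (note $B$ is binary, so this is well-defined, and whether $B$ has diagonal entries is irrelevant because the diagonal-block ambiguity is absorbed into the self-loop structure of $G_1$). I define $G_1=(V_1,E_1)$ with $V_1=\{a_1,\ldots,a_{r_1}\}$, putting $a_ia_j\in E_1$ ($i\neq j$) exactly when the $(i,j)$-block is $I$, and putting a self-loop $a_ia_i\in E_1$ exactly when the $(i,i)$-block equals $B*I$. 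A direct check against the adjacency rule for $G_1\scalebox{0.7}{$\square$}G_2$ then shows $A_{G_1\scalebox{0.7}{$\square$}G_2}=A_G$ (under the ordering induced by the vertex labels), so $G\cong G_1\scalebox{0.7}{$\square$}G_2$, completing the proof.

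I expect the only genuinely delicate point to be bookkeeping around the diagonal blocks and self-loops: in the Cartesian product, a self-loop at $a_i$ in $G_1$ contributes the identity to the $(i,i)$-block \emph{on top of} the copy of $B$ coming from $G_2$-adjacencies, which is precisely why the diagonal block is $B$ or $B*I$ rather than $0$ or $I$. Getting the $\{B,B*I\}$ versus $\{0,I\}$ dichotomy matched correctly to the self-loop/non-self-loop and $i=j$/$i\neq j$ cases is the part that needs care; everything else is a routine transcription of the argument for the Tensor product in Proposition \ref{1}. As the excerpt itself indicates (``The proof follows from discussions in Remark \ref{rem11}''), one may simply cite Remark \ref{rem11}, where this block characterization of the Cartesian product adjacency matrix has already been worked out, and note that the argument is otherwise identical to that of Proposition \ref{1}.
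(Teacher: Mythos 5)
Your proposal is correct and follows essentially the same route as the paper: the forward direction is exactly the block-by-block reading of the Cartesian product adjacency rule worked out in Remark \ref{rem11} (off-diagonal blocks $0$ or $I$ according to adjacency in $G_1$, diagonal blocks $B$ or $B*I$ according to self-loops), and the converse reconstructs $G_1$ and $G_2$ from the block pattern just as in the converse of Proposition \ref{1}. The paper itself gives no more than a citation of Remark \ref{rem11}, so your write-up is in fact a more explicit version of the intended argument, with the self-loop bookkeeping handled correctly.
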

 
 \begin{proof}
 	The proof follows from the discussion in Remark \ref{rem11}.
 \end{proof}

\bibliography{xbib}

\end{document}